\tikzset{
    >=stealth',
    punkt/.style={
           rectangle,
           rounded corners,
           draw=black, very thick,
           text width=8em,
           minimum height=2em,
           text centered},
    pil/.style={
           ->,
           thick,
           shorten <=2pt,
           shorten >=2pt,}
}
\newtheorem{theorem}{Theorem}
\newdefinition{definition}[theorem]{Definition}
\newtheorem{proposition}[theorem]{Proposition}
\newtheorem{remark}[theorem]{Remark}
\newtheorem{corollary}[theorem]{Corollary}
\newtheorem{fact}[theorem]{Fact}
\newtheorem{claim}[theorem]{Claim}
\newcommand{\K}{\mathcal{K}}
\newcommand{\Union}{\bigcup}
\DeclareMathOperator{\tp}{ga-tp}
\DeclareMathOperator{\id}{id}
\DeclareMathOperator{\Aut}{Aut}
\DeclareMathOperator{\cf}{cf}
\newcommand{\gaS}{\operatorname{ga-S}}
\DeclareMathOperator{\LS}{LS}
\newcommand{\T}{\mathcal{T}}
\newcommand{\C}{\mathfrak{C}}
\journal{Annals of Pure and Applied Logic}
\begin{document}

\begin{frontmatter}



\title{Symmetry and the Union of Saturated Models in Superstable Abstract Elementary Classes}

\author{M.M.~VanDieren\corref{cor1}}
\cortext[cor1]{Corresponding Author}
\address{Department of Mathematics\\
Robert Morris University\\
Moon Township, PA, 15108, USA}
\ead{vandieren@rmu.edu}

\begin{abstract}

Our main result  (Theorem \ref{main theorem}) suggests a possible dividing line ($\mu$-superstable $+$ $\mu$-symmetric) for abstract elementary classes  without using extra set-theoretic assumptions or tameness.  This theorem illuminates the structural side of such a dividing line.  
 
 \begin{theorem}\label{main theorem}
  Let $\K$ be an abstract elementary class with no maximal models of cardinality $\mu^+$ which satisfies the joint embedding and amalgamation properties.  Suppose $\mu\geq\LS(\K)$. If $\K$ is $\mu$- and $\mu^+$-superstable  and satisfies $\mu^+$-symmetry, then for any increasing  sequence $\langle M_i\in\K_{\geq\mu^{+}}\mid i<\theta<(\sup\|M_i\|)^+\rangle$  of $\mu^+$-saturated models, $\Union_{i<\theta}M_i$ is $\mu^+$-saturated.
 
 \end{theorem}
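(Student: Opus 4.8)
Write $M=\Union_{i<\theta}M_i$ and $\lambda=\sup_i\|M_i\|$, so that $\theta\le\lambda$ and $\|M\|=\lambda$. To verify $\mu^+$-saturation it suffices to fix $N\le_{\K}M$ with $\|N\|\le\mu$ and a type $p\in\gaS(N)$ and to realize $p$ inside $M$. The first move is a cofinality reduction. If $\cf(\theta)>\mu$, then the at most $\mu$ many elements of $N$ already lie in a single $M_i$, so by the coherence axiom $N\le_{\K}M_i$; since $M_i$ is $\mu^+$-saturated and $\|N\|\le\mu$, the type $p$ is realized in $M_i\le_{\K}M$. Hence I may assume $\theta$ is a regular cardinal with $\theta\le\mu$, and (passing to a cofinal subsequence) that the index set is $\theta$ itself. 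This is precisely the range of cofinalities where the superstability and symmetry hypotheses must do the work.

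The plan is to isolate a small base for $p$ and then manufacture a realization by reflecting through the saturated models. First I would fix, by a routine L\"owenheim--Skolem and coherence construction, an increasing continuous resolution $\langle N_i\mid i<\theta\rangle$ of $N$ with each $N_i\in\K_{\le\mu}$, $N_i\le_{\K}N$ and $N_i\le_{\K}M_i$; using the $\mu^+$-saturation of each $M_i$ I would interleave universal extensions so that $N_{i+1}$ is universal over $N_i$ inside $M_i$. Then $\mu$-superstability, in the form of the local character of $\mu$-nonsplitting along a chain with universal successors of length $\theta<\mu^+$, yields an index $i^*<\theta$ with $p$ not $\mu$-splitting over $N_{i^*}$. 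By uniqueness of nonsplitting extensions over universal models, $p$ is the $\mu$-nonsplitting extension of $p\restriction N_{i^*}$, and by the extension property it prolongs to a type $\bar p\in\gaS(M)$ that does not $\mu$-split over $N_{i^*}$.

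It remains to realize $\bar p\restriction N=p$ in $M$, and this is the heart of the matter. Using the $\mu^+$-saturation of the $M_i$ one can realize each restriction $\bar p\restriction N_i$ by an element of a saturated model; the obstacle is that an arbitrary such choice need not cohere, so that no single element of $M$ is forced to realize every $p\restriction N_i$ simultaneously. Equivalently, the nonsplitting of the chosen realization over $N_{i^*}$ may be destroyed at the limit stage $i=\theta$. This is exactly the continuity failure that $\mu^+$-symmetry is designed to preclude: I expect to choose the realizations coherently along $\langle M_i\rangle$ and then to invoke $\mu^+$-symmetry --- together with $\mu^+$-superstability, which governs the independence calculus for the chain of $\mu^+$-saturated models --- to guarantee that ``does not $\mu$-split over $N_{i^*}$'' survives the union, so that the limiting element $a\in M$ satisfies $\tp(a/N)=p$.

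The cleanest way to organize this final step is by contraposition: assuming $p$ is omitted in $M$, one extracts from the failed reflection an asymmetric configuration --- two elements together with a nested pair of $\mu^+$-saturated submodels witnessing independence in one direction but not the other --- contradicting $\mu^+$-symmetry. I expect this limit/symmetry step to be the main obstacle. The cofinality reduction and the local-character step are routine given $\mu$- and $\mu^+$-superstability, whereas converting the omission of $p$ into a genuine violation of symmetry, with every model at the correct cardinality and the nonsplitting data correctly aligned across the chain, is where the real care is required.
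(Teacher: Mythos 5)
Your reductions (cofinality, the resolution $\langle N_i\mid i<\theta\rangle$ with universal successors, local character of non-$\mu$-splitting giving a base $N_{i^*}$, and the nonsplitting extension of $p$) all match the opening of the paper's argument. But the step you yourself flag as ``the heart of the matter'' is exactly where your proposal stops being a proof: you say you ``expect'' to choose coherent realizations and then extract, from the omission of $p$, ``an asymmetric configuration contradicting $\mu^+$-symmetry,'' without constructing either. That is not how the paper uses symmetry, and there is no evident direct route from omission of $p$ to a violation of Definition \ref{sym defn}: the paper never produces such a configuration. Instead, $\mu^+$-symmetry enters only indirectly, through Fact \ref{symmetry theorem} and Theorem \ref{uniqueness thm}, as the \emph{uniqueness of limit models of cardinality $\mu^+$}, whose role is to make every $(\mu^+,\theta)$-limit model saturated. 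The actual realization argument is: build a \emph{directed system} $\langle M^*_i\in\K_{\mu^+}, f_{i,j}\mid i\leq j<\theta\rangle$ with $M^*_i\prec_{\K}M_i$, $f_{i,j}\restriction N_i=\id_{N_i}$, and $M^*_{i+1}$ universal over $f_{i,i+1}(M^*_i)$; take the direct limit $M^*$ (which need not sit inside $M$), observe it is a $(\mu^+,\theta)$-limit hence saturated, realize there a nonsplitting extension $p^*\in\gaS(N^*)$ of $p$ with $N^*$ absorbing the images $f^*_{i,\theta}(N)$, and then \emph{pull the realization back}: $b^*=f_{i,\theta}(b)$ for some $b\in M^*_i\prec_{\K}M_i\prec_{\K}M$, and uniqueness of non-$\mu$-splitting extensions forces $\tp(b/N)=p$. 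Nothing in your sketch supplies this mechanism or a substitute for it.

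There is also a concrete obstruction your plan overlooks, which is why the directed system (rather than an increasing chain of realizations or of submodels) is forced. Since $\|M_i\|$ may equal $\mu^+$ exactly, the $\mu^+$-saturation of $M_{i+1}$ only gives universality over submodels of cardinality $\leq\mu$; one cannot build inside $M$ an increasing chain of $\mu^+$-sized models each universal over its predecessor (this is precisely the gap between the warm-up Theorem \ref{limit thm}, which assumes $\lambda\geq\mu^{++}$, and Theorem \ref{saturated theorem}). The paper's workaround uses Proposition \ref{mu-plus-limit} to find a $(\mu^+,\mu^+)$-limit model inside each $M_i$, and at limit stages corrects the external direct limit back into $M_i$ by an isomorphism over $N_i$ --- a correction that is legitimate only because uniqueness of $\mu^+$-limit models (i.e., the symmetry hypothesis, repackaged) applies. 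Your ``choose the realizations coherently along $\langle M_i\rangle$'' cannot get off the ground without some version of this device, and your proposed contraposition would still owe a construction of the two elements and the nested nonsplitting data that Definition \ref{sym defn} requires. So the proposal is a correct frame with the central argument missing, not a complete alternative proof.
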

We also apply results of \cite{Va3-SS} and use towers  to transfer symmetry from $\mu^+$ down to $\mu$ in abstract elementary classes which are both $\mu$- and $\mu^+$-superstable:  
 \begin{theorem}\label{symmetry transfer}
Suppose $\K$ is an abstract elementary class satisfying the amalgamation and joint embedding properties and that $\K$
is both $\mu$- and $\mu^+$-superstable.  If $\K$ has symmetry for non-$\mu^+$-splitting, then $\K$ has symmetry for non-$\mu$-splitting.
\end{theorem}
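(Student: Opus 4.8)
The plan is to argue by contraposition: assuming $\K$ fails symmetry for non-$\mu$-splitting, I will manufacture a witness to the failure of symmetry for non-$\mu^+$-splitting, contradicting the hypothesis. The engine for the transfer is the tower machinery together with the equivalence, established in \cite{Va3-SS}, between $\mu$-symmetry and the continuity of reduced towers in $\K_\mu$. Concretely, a failure of $\mu$-symmetry produces a reduced tower $\langle M_i, a_i, N_i \mid i \le \delta\rangle$ in $\K_\mu$ that is discontinuous at a limit ordinal $\delta$ (so $M_\delta \neq \bigcup_{i<\delta} M_i$ in the reduced sense), equivalently a configuration $(M_0, M, a, b)$ in which $a \in M$ and $\tp(b/M)$ does not $\mu$-split over $M_0$ yet the symmetric dual cannot be realized over any limit model over $M_0$ carrying $b$. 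Before building the upward witness I would record the two facts supplied by superstability at both cardinalities: from $\mu^+$-superstability and \cite{Va3-SS} I get existence and uniqueness of limit models at $\mu^+$ with $(\mu^+,\theta)$-limit models being $\mu^+$-saturated, and from $\mu$-superstability I get the monotonicity of non-$\mu$-splitting under enlarging the base along a chain, its continuity at limits, and the locatability of a $\mu$-sized base for any non-splitting type.

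Next I would assemble, as a continuous increasing chain of length $\mu^+$ of $\mu$-saturated models, a triple of $\mu^+$-saturated models $M_0^+ \prec M^+ \prec N^+$ into which the $\mu$-level witness embeds, keeping $a \in M^+$ and $b \in N^+$. The reason for threading the construction through $\mu$-saturated approximations is that at each approximation the non-$\mu$-splitting of $\tp(b/\cdot)$ over the fixed small base $N_0$ is preserved, so that in the union $\tp(b/M^+)$ does not $\mu^+$-split over $M_0^+$; this is where the continuity of non-splitting across the chain and the reduction of $\mu^+$-splitting to its $\mu$-sized approximations do the decisive work. Simultaneously I would keep the reduced-tower obstruction reproduced at cofinally many levels of the chain, so that the discontinuity witnessed at level $\mu$ remains visible inside $M^+$.

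I would then verify that $(M_0^+, M^+, a, b)$ witnesses the failure of symmetry for non-$\mu^+$-splitting. That $\tp(b/M^+)$ does not $\mu^+$-split over $M_0^+$ is the content of the previous step; the persistence of the discontinuity guarantees that no limit model over $M_0^+$ carrying $b$ can make $\tp(a/\cdot)$ non-$\mu^+$-splitting over $M_0^+$, since any such $\mu^+$-level completion would restrict to a $\mu$-level completion of the original configuration. This contradicts the assumed $\mu^+$-symmetry and completes the argument.

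The main obstacle is exactly the non-splitting bookkeeping in the construction: I must convert non-$\mu$-splitting over a $\mu$-sized base into non-$\mu^+$-splitting over a $\mu^+$-sized base, and --- harder --- I must ensure that the \emph{failure} of the symmetric dual survives the passage upward, since the non-existence of a completing model at level $\mu$ does not formally entail its non-existence at level $\mu^+$. Controlling this forces the $\mu^+$-saturated models to be built as unions of $\mu$-saturated models in which the obstruction is reproduced cofinally, so that a hypothetical $\mu^+$-level completion necessarily projects down to a $\mu$-level completion, contradicting the original failure of $\mu$-symmetry.
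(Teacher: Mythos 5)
Your overall strategy --- contraposition from failure of $\mu$-symmetry, routed through the reduced-tower characterization of Fact \ref{symmetry theorem} --- starts on the same footing as the paper, but it then diverges into a step that does not work: you try to \emph{manufacture a failure} of symmetry for non-$\mu^+$-splitting, i.e.\ a configuration $(M_0^+,M^+,a,b)$ admitting \emph{no} completing limit model $M^b$ at level $\mu^+$. That is a universal non-existence claim, and your justification --- that ``any such $\mu^+$-level completion would restrict to a $\mu$-level completion of the original configuration'' --- has no mechanism behind it. Non-$\mu^+$-splitting over a $\mu^+$-sized base is witnessed or refuted by $\mu^+$-sized models and says nothing directly about $\mu$-sized witnesses over the original $\mu$-sized base; nor is there any reason a $\mu^+$-sized limit model over $M_0^+$ containing $b$ should contain a $\mu$-sized submodel that is itself a limit model over $M_0$, contains $b$, and preserves the relevant non-$\mu$-splitting. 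You flag this obstacle yourself in your last paragraph, but the proposed remedy (reproducing the obstruction ``cofinally'' in the chain) is not an argument. The same cardinality mismatch infects your upward step: you assert that preservation of non-$\mu$-splitting over the small base along the chain yields that $\tp(b/M^+)$ does not $\mu^+$-split over $M_0^+$, but non-$\mu$-splitting over a $\mu$-sized base does not give non-$\mu^+$-splitting over a $\mu^+$-sized base without a genuine argument.

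The paper sidesteps both problems by never attempting to exhibit a failure at $\mu^+$; it uses $\mu^+$-symmetry \emph{positively}. Starting from a minimal-length reduced discontinuous tower $\T$ at $\mu$ with offending element $b\in M_\delta\setminus\bigcup_{\beta<\delta}M_\beta$, it resolves $\T\restriction\delta$ by a $\mu^+$-chain of reduced continuous towers, takes unions $M^{\mu^+}_\beta$ (which are $(\mu^+,\mu^+)$-limit models by Proposition \ref{mu-plus-limit}), and --- crucially --- uses $\mu^+$-superstability to select \emph{new} $\mu^+$-sized bases $N^{\mu^+}_\beta$ over which $\tp(a_\beta/M^{\mu^+}_\beta)$ does not $\mu^+$-split, rather than trying to promote the old $\mu$-sized bases, which is exactly where your version is stuck. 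Then $\mu^+$-symmetry via Fact \ref{symmetry theorem}, together with density of reduced towers (Fact \ref{density of reduced}), yields a reduced, hence \emph{continuous}, extension $\T^*$ at $\mu^+$, so $b$ lands in some $M^*_\beta$ with $\beta<\delta$; and the uniqueness of $(\mu^+,\mu^+)$-limit models over $N^{\mu^+}_\beta$ supplies an isomorphism (Claim \ref{star non-split}) that converts the $\mu^+$-level non-splitting back into non-$\mu$-splitting of $\tp(a_\beta/M^*_\beta)$ over the original $N_\beta$. This lets one carve out a $\mu$-sized tower extension $\T^b$ of $\T$ capturing $b$, contradicting reducedness of $\T$ --- the contradiction is against the reduced tower, not against a symmetry configuration directly. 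If you wish to salvage your outline, the isomorphism trick of Claim \ref{star non-split} is precisely the missing ``downward projection'' device, but note that it operates at the level of towers and non-splitting bases, not at the level of the symmetry configuration itself.
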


\end{abstract}

\begin{keyword}
saturated models \sep abstract elementary classes \sep superstability  \sep splitting \sep limit models

\MSC[2008] 03C48 \sep 03C45 \sep 03C50

\end{keyword}

\end{frontmatter}

In first-order logic, the statement,  the union of 
any increasing sequence $\langle M_i\mid i<\theta\rangle$  of saturated models is saturated, is a consequence  of superstability (\cite{Ha} and \cite[Theorem III.3.11]{She}).  In fact, the converse is also true \cite{AG}.  Our paper provides a new first-order proof of Theorem III.3.11 of \cite{She} when $\kappa(T)=\aleph_0$.

In abstract elementary classes (AECs), there are several approaches to generalizing superstability, and there is not yet a consensus on the correct notion.  In fact it could be that superstability breaks down into several distinct dividing lines.  Shelah suggests the existence of superlimits of every sufficiently large cardinality \cite[Chapter N Section 2]{Sh-AECbook} as the definition of superstability. Elsewhere he uses frames, but in his categoricity transfer results (e.g. \cite{Sh394}) he makes use of a localized notion more similar to $\mu$-superstability (Definition \ref{ss assm}).  

In this paper we examine how
 the statement,  that  the union of 
any increasing sequence $\langle M_i\mid i<\theta\rangle$  of saturated models is saturated, and $\mu$-superstability interact in abstract elementary classes.  
 
 There has been much progress in understanding the interaction.  We refer the reader to the introduction of \cite{BoVa-tame} for an extensive review of the history of the union of saturated models and the various proposals for a definition of superstability in AECs.  The most general result to date is due to Boney and Vasey for tame AECs.  They prove that a version of superstability  and tameness imply that the union of an increasing chain of $\mu$-saturated models is $\mu$-saturated for  $\mu>\beth_\lambda=\lambda>\LS(\K)$  \cite[Theorem 0.1]{BoVa-tame}.
  
  We prove a related result here.  Our result differs from \cite{BoVa-tame} in both assumptions and methodology.  We do not assume tameness, nor the existence of arbitrarily large models, and  $\mu$ does not need to be large.  Our methods involve limit models (and implicitly towers) and non-splitting instead of the machinery of averages and forking.  Additionally our proof is shorter.

 Underlying the proof of Theorem \ref{main theorem} are towers.
A tower is a relatively new model-theoretic concept unique to abstract elementary classes.   Towers were introduced by Shelah and Villaveces \cite{ShVi} as a tool to prove the uniqueness of limit models and later used by VanDieren \cite{Va1}, \cite{Va2} and by Grossberg, VanDieren, and Villaveces \cite{GVV}. 

\begin{definition}
A \emph{tower} is a sequence of length $\alpha$ of  limit models, denoted by $\bar M=\langle M_i\in\K_\mu\mid i<\alpha\rangle$, along with a sequence of designated elements $\bar a=\langle a_{i}\in M_{i+1}\backslash M_i\mid i+1<\alpha\rangle$ and a sequence of designated submodels $\bar N=\langle N_{i}\mid i+1<\alpha\rangle$ for which
 $M_i\prec_{\K}M_{i+1}$, $\tp(a_i/M_i)$ does not $\mu$-split over $N_i$, and $M_i$ is universal over $N_i$ (see for instance Definition I.5.1 of \cite{Va1}). 
 \end{definition}

Unlike many of the model-theoretic concepts in the literature of abstract elementary classes, the concept of a tower does not have a pre-established first-order analog.  Therefore there is a need to understand the applications and limitations of this concept.  
In \cite{Va3-SS}, VanDieren establishes that the statement that reduced towers are continuous is equivalent to symmetry for $\mu$-superstable  abstract elementary classes (see Fact \ref{symmetry theorem}).  Here we further explore the connection between reduced towers and symmetry  by using reduced towers in the proof of Theorem \ref{symmetry transfer}.

We can use Theorem \ref{symmetry transfer} to weaken the assumptions of Corollary 1 of \cite{Va3-SS} by replacing categoricity in $\mu^+$ with categoricity in $\mu^{+n}$ for some $n<\omega$ to conclude symmetry for non-$\mu$-splitting (see Corollary \ref{categoricity corollary} in Section \ref{downward section}).  Additionally, we make progress on improving the work of  \cite{ShVi}, \cite{Va1}, \cite{Va2}, \cite{GVV}, and \cite{Va3-SS} by proving the uniqueness of limit models of cardinality $\mu$ follows from categoricity in $\mu^{+n}$ for some $n<\omega$ without requiring tameness.  The uniqueness of limit models has been explored by others, assuming  tameness (e.g. \cite{BoGr}).

On its own, transferring symmetry is an interesting property that has been studied by others.  For instance, Shelah and separately Boney and Vasey   
transfer symmetry in a frame between cardinals  under set-theoretic assumptions \cite[Section II]{She} or using some level of tameness \cite[Section 6]{BoVa-tame}, respectively.  Our paper differs from this work in a few ways.  First, we do not assume tameness nor set-theoretic assumptions, and we do not work within the full strength of a frame.  The methods of this paper include reduced towers whereas the other authors use the order property as one of many mechanisms to transfer symmetry.  This line of work  is further extended in \cite{VV}.

One of the main questions surrounding this work is the interaction between the hypothesis of $\mu$-superstability, $\mu$-symmetry, the uniqueness of limit models of cardinality $\mu$, and the statement that the union of an increasing chain of $\mu$-saturated models is $\mu$-saturated.
Theorem \ref{main theorem} compliments \cite{Va3-SS} where the statement, that the union of an increasing  sequence $\langle M_i\in\K_{\mu^+}\mid i<\theta\rangle$  of saturated models is saturated, implies $\mu$-symmetry.  The following  combination  of Theorem 4 and Theorem 5 of \cite{Va3-SS} is close to, but not, the converse of Theorem \ref{main theorem}.

\begin{theorem}\label{transfer theorem}

 Let $\K$ be an abstract elementary class satisfying the amalgamation and joint embedding properties.  Suppose $\K$ is $\mu$- and $\mu^+$-superstable.  
 If, in addition, $\K$ satisfies the  property that the union of any chain of saturated models of cardinality $\mu^+$ is saturated, then $\K$ has $\mu$-symmetry.
\end{theorem}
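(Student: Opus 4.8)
The plan is to prove the contrapositive: assuming $\mu$-symmetry fails, I would produce an increasing chain of saturated models of cardinality $\mu^+$ whose union is not saturated, contradicting the hypothesis. The bridge between the two sides is Fact~\ref{symmetry theorem}. Since $\K$ is $\mu$-superstable, the failure of $\mu$-symmetry is equivalent to the existence of a reduced tower $\mathcal T=\langle M_i,a_i,N_i\mid i<\delta+1\rangle$ of models in $\K_\mu$ that is discontinuous at a limit ordinal $\delta<\mu^+$. I would fix such a tower, set $M'_\delta:=\Union_{i<\delta}M_i$, which is a proper $\prec_{\K}$-submodel of $M_\delta$, and choose a witness $c\in M_\delta\setminus M'_\delta$. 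Because $\delta<\mu^+$, the model $M'_\delta$ is a union of at most $\mu$ models of size $\mu$ and hence has cardinality $\mu$, so $\tp(c/M'_\delta)$ is a Galois type over a model of size $\mu$. The goal then becomes to build a chain of saturated models of size $\mu^+$ along which this one type is omitted.

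The construction is the heart of the argument. Using $\mu^+$-superstability I would realize a saturated model of cardinality $\mu^+$ as a continuous increasing union of length $\mu^+$ of $\mu$-saturated models of size $\mu$, and use this to thicken the discontinuous tower into a two-dimensional array. Concretely, I would build a continuous increasing chain $\langle\mathcal M_i\mid i<\delta\rangle$ of saturated models of size $\mu^+$ with $M_i\prec_{\K}\mathcal M_i$, where each $\mathcal M_i=\Union_{\zeta<\mu^+}M_i^\zeta$ for an increasing continuous universal chain of $\mu$-sized models $M_i^\zeta$, arranged so that for every fixed $\zeta$ the $\mu$-sized tower $\langle M_i^\zeta\mid i\le\delta\rangle$ refines $\mathcal T$ and remains reduced and discontinuous with witness $c$. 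The role of reducedness is exactly to prevent enlargement from absorbing the witness: no model built along a reduced tower refining $\mathcal T$ can realize $\tp(c/M'_\delta)$, since doing so would repair the discontinuity. Note that $M'_\delta$ is not a submodel of any single $\mathcal M_i$, only of the full union, so the saturation of the individual $\mathcal M_i$ is consistent with this omission. Propagating the reducedness through all $\mu^+$ approximations yields that $\Union_{i<\delta}\mathcal M_i\supseteq M'_\delta$ omits $\tp(c/M'_\delta)$; and since $\cf(\delta)\le\mu$, this omission is not automatically repaired at the union. Hence $\Union_{i<\delta}\mathcal M_i$ is a non-saturated union of a chain of saturated models of cardinality $\mu^+$, the desired contradiction.

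I expect the main obstacle to be the coherence of this two-cardinal construction. One must simultaneously (i)~keep each level $\mathcal M_i$ saturated of size $\mu^+$, (ii)~maintain enough reduced-tower structure at every $\mu$-sized approximation $\langle M_i^\zeta\mid i\le\delta\rangle$ so that the reducedness inherited from $\mathcal T$ can be invoked, and (iii)~verify that the discontinuity type genuinely survives to the union rather than being realized by some element that first appears at the $\mu^+$ level. Step~(iii) is where the non-$\mu$-splitting calculus and the minimality packaged into \emph{reduced} must be combined carefully: any purported realization of $\tp(c/M'_\delta)$ in $\Union_{i<\delta}\mathcal M_i$ lies in some $\mathcal M_i$ and would contradict the reducedness of the towers refining $\mathcal T$, which cannot occur. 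The bookkeeping needed to run this induction of length $\mu^+$ while preserving all three conditions is the delicate part; the supporting ingredients---the existence and tower-characterization of saturated models, the limit-model machinery, and the splitting calculus---are supplied by $\mu$- and $\mu^+$-superstability together with Fact~\ref{symmetry theorem}.
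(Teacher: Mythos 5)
Your overall architecture is the right one and matches the actual proof of this statement (which the paper does not prove internally but imports as Theorems 4 and 5 of \cite{Va3-SS}; the same machinery is on display in the paper's proof of Theorem \ref{symmetry transfer}): pass through Fact \ref{symmetry theorem} to a reduced discontinuous tower $\T\in\K^*_{\mu,\delta+1}$ with witness $c\in M_\delta\setminus M'_\delta$, build a $\mu^+$-chain of $<$-extensions of $\T$, and read off the columns $\mathcal M_\beta=\Union_{\zeta<\mu^+}M^\zeta_\beta$, which are $(\mu,\mu^+)$-limit models and hence trivially saturated of cardinality $\mu^+$ (cf.\ Proposition \ref{mu-plus-limit}); the existence of the chain of extensions is Fact \ref{density of reduced} at successors plus the continuity-of-non-splitting observation used at equation (\ref{non-split}) at limits. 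Two of your side requirements are unnecessary baggage: you do not need each level-$\zeta$ tower to be reduced, and you do not need to separately ``maintain discontinuity with witness $c$'' --- since each $\T^\zeta\geq\T$ and $c\in\Union_{j\leq\delta}M_j$, the condition $(*)_\beta$ of Definition \ref{reduced defn} applied to the \emph{original} $\T$ already gives $c\notin M^\zeta_\beta$ for all $\zeta<\mu^+$ and $\beta<\delta$.

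The genuine gap is in your step (iii), and your stated justification for it is wrong as it stands. You claim that a realization of $\tp(c/M'_\delta)$ in some $\mathcal M_\beta$ ``would contradict the reducedness of the towers refining $\T$.'' Reducedness of the refining towers is irrelevant, and an arbitrary realization $c'\neq c$ contradicts nothing directly: $(*)_\beta$ only forbids elements of $\Union_{j\leq\delta}M_j$ from appearing in extension models, and $c'$ need not lie in $\Union_{j\leq\delta}M_j$. The missing idea is the automorphism step: given $c'\in M^\zeta_\beta$ with $\tp(c'/M'_\delta)=\tp(c/M'_\delta)$, choose $h\in\Aut(\C)$ fixing $M'_\delta$ --- hence fixing every $M_\gamma$ ($\gamma<\delta$), every $N_\gamma$, and every $a_\gamma$, so all the data of $\T\restriction\delta$ --- with $h(c')=c$. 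Note that $h$ does \emph{not} fix the top model $M_\delta$, so $h(\T^\zeta)$ is only an extension of $\T\restriction\delta$; you must then append a fresh top model universal over both $M_\delta$ and $\Union_{\gamma<\delta}h(M^\zeta_\gamma)$ to recover a full-length extension of $\T$ whose $\beta$-th model contains $c$, and only then does Definition \ref{reduced defn} yield the contradiction $c\in M''_\beta\cap\Union_{j\leq\delta}M_j\setminus M_\beta$. This moving-and-reappending mechanism is precisely the content of the final paragraph of the paper's proof of Theorem \ref{symmetry transfer} (the construction of $\T^b$) together with the automorphism technique of Claim \ref{star non-split}; without it, the union of your chain of saturated models could a priori realize $\tp(c/M'_\delta)$ by some element other than $c$, and the proof collapses.
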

 
 In fact combining the results from \cite{Va3-SS} with the work here we get the implications depicted in Figure \ref{fig:ss}.
 
 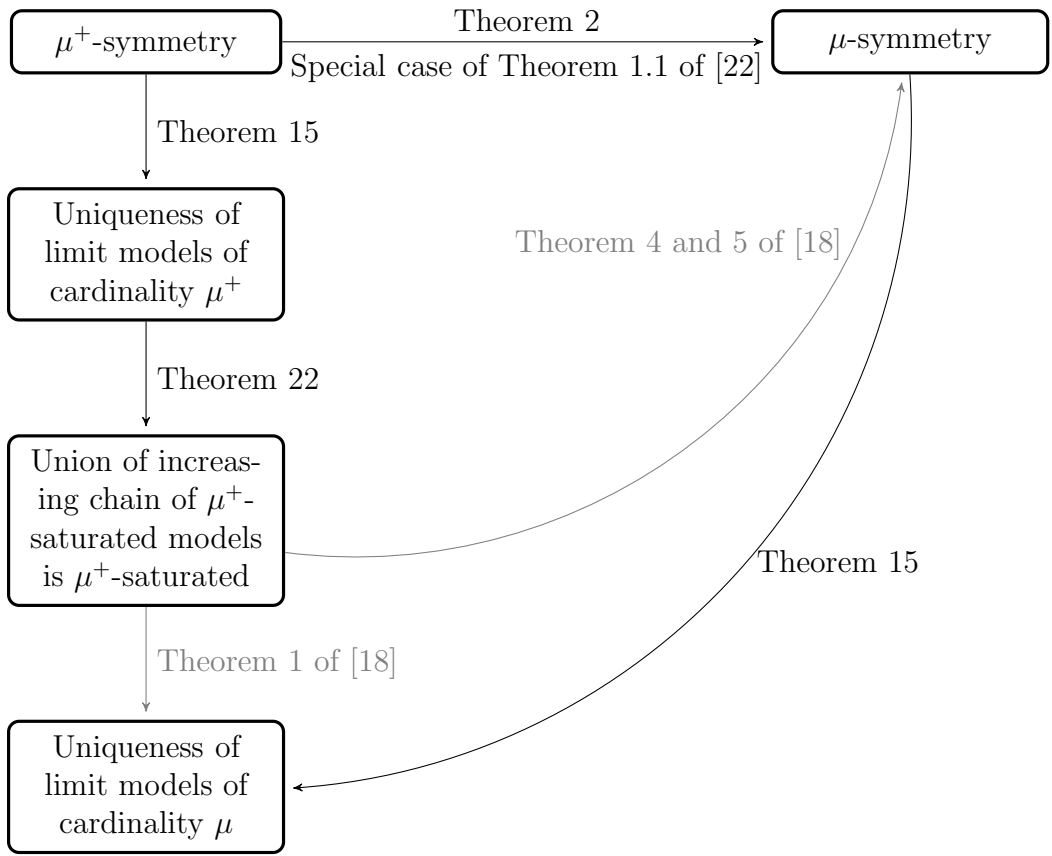
\begin{figure}[h]
\begin{tikzpicture}
 \node[punkt] (ss) {$\mu^+$-symmetry};
 \node[punkt, inner sep=5pt,below=1.5cm of ss]
 (unique) {Uniqueness of limit models of cardinality $\mu^+$};
  \node[punkt, inner sep=5pt, right=6.5cm of ss] (mu-sym) {$\mu$-symmetry};
   \draw [->, shorten >=3pt] (ss) to node[pos=0.5,above] {Theorem \ref{symmetry transfer}} node[pos=0.5, below]{Special case of Theorem 1.1 of \cite{VV}} (mu-sym);
 \draw [->, shorten >=3pt] (ss) to node[pos=0.5,right] {Theorem \ref{uniqueness thm}}(unique);
 \node[punkt, inner sep=5pt, below=1.5cm of unique] (union) {Union of increasing chain of $\mu^+$-saturated models is $\mu^+$-saturated};
 \node[punkt, inner sep=5pt, below=1.5cm of union] (uniqueness) {Uniqueness of limit models of cardinality $\mu$};
  \draw [->, shorten >=3pt] (unique) to node[pos=0.5,right] {Theorem \ref{saturated theorem}}(union);
    \draw [->, shorten >=3pt,gray] (union) to node[pos=0.5,right]{Theorem 1 of \cite{Va3-SS}}(uniqueness);
    \draw[->, shorten >=3pt, bend left=45](mu-sym) to node[pos=0.5,right]{Theorem \ref{uniqueness thm}}(uniqueness);
    \draw[->, shorten >=3pt, bend right=45,gray] (union) to node[pos=0.8,left]{Theorem 4 and 5 of \cite{Va3-SS}} (mu-sym);
   \end{tikzpicture}
   \caption{A diagram of some the local implications for  abstract elementary classes which are both $\mu$ and $\mu^+$-superstable, satisfy the amalgamation and joint embedding properties, and have no maximal models of carnality $\mu^+$.  The gray labels represent results from \cite{Va3-SS}.} \label{fig:ss}
\end{figure}

 This diagram suggests several questions including: does the uniqueness of limit models of cardinality $\mu$ imply $\mu^+$-symmetry (or even $\mu$-symmetry) in $\mu$-superstable classes?
 There are also many questions that remain open concerning the non-structure side of any of the proposed definitions for superstability for AECs.  In fact, very little is known about the implications of the failure of $\mu$-superstability.  However VanDieren and Vasey have shown that with  $\mu$-superstability holding in sufficiently many cardinals, failure of $\mu$-symmetry would imply the order property \cite{VV2}, which Shelah has claimed implies many models \cite{Sh394}.

The paper is structured as follows.  Section \ref{sec:background} provides some of the pre-requisite material.  
The subsequent section contains an observation about how saturated models and limit models are related which is key in being able to construct towers of cardinality $\mu^+$ from towers of cardinality $\mu$.  This construction is the basis for the proof of Theorem \ref{symmetry transfer} which appears in
Section \ref{downward section}.
Then in Section \ref{sec:warm-up} we prove a weaker result than Theorem \ref{main theorem} to highlight the structure of the proof of Theorem \ref{main theorem} since the construction in the proof of Theorem \ref{main theorem} is more complicated requiring a directed system instead of an  increasing chain.
Finally, in Section \ref{sec:main theorem} we prove Theorem \ref{main theorem}.   We finish the paper with a summary of how this work fits into the recently growing body of research on superstability in abstract elementary classes.

At the suggestion of the referees, this paper is the synthesis of two preprints \cite{Vold-union} and \cite{Vold-transfer} which were disseminated in July of 2015.

\section{Background}\label{sec:background}

For the remainder of this paper we will assume that $\K$ is an abstract elementary class with no maximal models of cardinality $\mu^+$ satisfying the joint embedding and amalgamation properties.

 Many of the pre-requisite definitions and notation can be found in \cite{GVV}.  Here we recall the more specialized concepts that we will be using explicitly in the proofs of Theorem \ref{main theorem} and Theorem \ref{symmetry transfer}.

We will use the following definition of $\mu$-superstability:
\begin{definition}\label{ss assm}
$\K$ is \emph{$\mu$-superstable} if $\K$ is Galois-stable in $\mu$ and 
 $\mu$-splitting satisfies the property:
for all infinite $\alpha$, for every sequence $\langle M_i\mid i<\alpha\rangle$ of
  limit models of cardinality $\mu$ with $M_{i+1}$ universal over $M_i$, and for every $p\in\gaS(M_\alpha)$, where
  $M_\alpha=\bigcup_{i<\alpha}M_i$, we have that
there exists $i<\alpha$ such that $p$
does not $\mu$-split over $M_i$.

\end{definition}

\begin{remark}
Other definitions of $\mu$-superstability for AECs appear in the literature. For instance Vasey introduces a very similar definition of superstability with the additional requirement of no maximal models of cardinality $\mu$  \cite[Definition 10.1]{V1}. We choose to separate this condition out to be consistent with the presentation in \cite{GVV}, \cite{Va3-SS}, etc.


\end{remark}

In \cite[Chapter N Section 2]{Sh-AECbook}, Shelah discusses the problem of generalizing first-order superstability to AECs. There Shelah suggests using the existence of a superlimit model in every sufficiently large cardinality as a dividing line.  Here we take a different, more local approach where an AEC may exhibit superstable-like properties in small cardinalities but not necessarily in larger cardinalities.  This helps to classify, for instance, those classes such as the Hart-Shelah example \cite{HS} which have structural properties in small cardinalities but non-structural attributes in larger cardinalities.
In \cite{VV} and \cite{VV2} we consider how Theorem \ref{main theorem} and Theorem \ref{symmetry transfer} color the global picture of superstability when one assumes categoricity or tameness.
Guided by the first-order characterization of superstability that the union of an increasing chain of saturated models is saturated,
Theorem \ref{main theorem} provides evidence that Definition \ref{ss assm} along with $\mu$-symmetry may be a reasonable generalization of superstability.

\begin{definition}\label{sym defn}
We say that an abstract elementary class  exhibits \emph{symmetry for non-$\mu$-splitting} if  whenever models $M,M_0,N\in\K_\mu$ and elements $a$ and $b$  satisfy the conditions \ref{limit sym cond}-\ref{last} below, then there exists  $M^b$  a limit model over $M_0$, containing $b$, so that $\tp(a/M^b)$ does not $\mu$-split over $N$.  See Figure \ref{fig:sym}.  We will abbreviate this concept by \emph{$\mu$-symmetry} when it is clear that the dependence relation is $\mu$-splitting. 
\begin{enumerate} 
\item\label{limit sym cond} $M$ is universal over $M_0$ and $M_0$ is a limit model over $N$.
\item\label{a cond}  $a\in M\backslash M_0$.
\item\label{a non-split} $\tp(a/M_0)$ is non-algebraic and does not $\mu$-split over $N$.
\item\label{last} $\tp(b/M)$ is non-algebraic and does not $\mu$-split over $M_0$. 
   
\end{enumerate}

\end{definition}

\begin{figure}[h]
\begin{tikzpicture}[rounded corners=5mm, scale=3,inner sep=.5mm]
\draw (0,1.25) rectangle (.75,.5);
\draw (.25,.75) node {$N$};
\draw (0,0) rectangle (3,1.25);
\draw (0,1.25) rectangle (1,0);
\draw (.85,.25) node {$M_0$};
\draw (3.2, .25) node {$M$};
\draw[color=gray] (0,1.25) rectangle (1.5, -.5);
\node at (1.1,-.25)[circle, fill, draw, label=45:$b$] {};
\node at (2,.75)[circle, fill, draw, label=45:$a$] {};
\draw[color=gray] (1.75,-.25) node {$M^{b}$};
\end{tikzpicture}
\caption{A diagram of the models and elements in the definition of symmetry. We assume the type $\tp(b/M)$ does not $\mu$-split over $M_0$ and $\tp(a/M_0)$ does not $\mu$-split over $N$.  Symmetry implies the existence of $M^b$ a limit model over $M_0$ containing $b$, so that $\tp(a/M^b)$  does not $\mu$-split over $N$.} \label{fig:sym}
\end{figure}
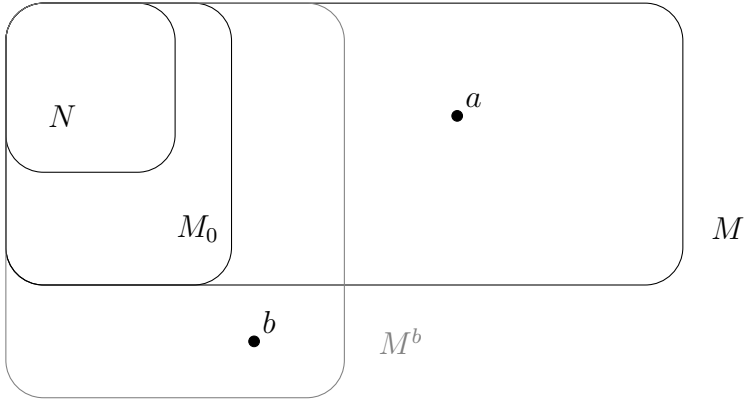

This concept of $\mu$-symmetry was introduced in \cite{Va3-SS} and shown to be equivalent to a property about reduced towers (see Fact \ref{symmetry theorem}).  Before stating this result,
let us recall a bit of terminology regarding towers.  The collection of all towers $(\bar M,\bar a,\bar N)$ made up of models of cardinality $\mu$ and sequences indexed by $\alpha$ is denoted by $\K^*_{\mu,\alpha}$.  For $(\bar M,\bar a,\bar N)\in\K^*_{\mu,\alpha}$, if $\beta<\alpha$ then we write $(\bar M,\bar a,\bar N)\restriction\beta$ for the tower made of the subseqences $\bar M\restriction\beta=\langle M_i\mid i<\beta\rangle$, $\bar a\restriction\beta=\langle a_i\mid i+1<\beta\rangle$, and $\bar N\restriction\beta=\langle N_i\mid i+1<\beta\rangle$.  We sometimes abbreviate the tower $(\bar M,\bar a,\bar N)$ by $\T$.

\begin{definition}

For towers $(\bar M,\bar a,\bar N)$ and $(\bar M',\bar a',\bar N')$ in $\K^*_{\mu,\alpha}$, we say $$(\bar M,\bar a,\bar N)\leq (\bar M',\bar a',\bar N')$$ if for all $i<\alpha$, $M_i\preceq_{\K}M'_i$, $\bar a=\bar a'$, $\bar N=\bar N'$ and whenever $M'_i$ is a proper extension of $M_i$, then $M'_i$ is universal over $M_i$.  If for each $i<\alpha$,  $M'_i $ is universal over $M_i$ we will write $(\bar M,\bar a,\bar N)< (\bar M',\bar a',\bar N')$.
\end{definition}

\begin{definition}\label{reduced defn}\index{reduced towers}
A tower $(\bar M,\bar a,\bar N)\in\K^*_{\mu,\alpha}$ is said to 
be \emph{reduced} provided that for every $(\bar M',\bar a,\bar
N)\in\K^*_{\mu,\alpha}$ with
$(\bar M,\bar a,\bar N)\leq(\bar M',\bar a,\bar
N)$ we have that for every
$i<\alpha$,
$$(*)_i\quad M'_i\cap\Union_{j<\alpha}M_j = M_i.$$
\end{definition}

The following result from  \cite{Va3-SS} links together symmetry and reduced towers:
\begin{fact}\label{symmetry theorem}
Assume $\K$ is an abstract elementary class satisfying superstability properties for $\mu$.  Then the following are equivalent:
\begin{enumerate}
\item\label{sym} $\K$ has symmetry for non-$\mu$-splitting.
\item\label{red} If $(\bar M,\bar a,\bar N)\in\K^*_{\mu,\alpha}$ is a reduced tower, then $\bar M$ is a continuous sequence (i.e. for every limit ordinal $\beta<\alpha$, we have $M_\beta=\Union_{i<\beta}M_i$).

\end{enumerate}
\end{fact}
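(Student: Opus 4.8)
The plan is to prove the two implications separately, each by contraposition, using three standard facts about $\mu$-superstable classes that are available in this setting: monotonicity of non-$\mu$-splitting, the existence and (over limit models) uniqueness of non-$\mu$-splitting extensions, and the fact that every tower in $\K^*_{\mu,\alpha}$ has a reduced extension, together with the standard tower-extension lemma allowing one to enlarge a single coordinate while preserving the designated data $\bar a$ and $\bar N$.

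For (\ref{sym}) $\Rightarrow$ (\ref{red}), suppose some reduced tower $(\bar M,\bar a,\bar N)\in\K^*_{\mu,\alpha}$ is discontinuous at a limit ordinal $\beta<\alpha$, and fix $b\in M_\beta\setminus M_{<\beta}$, where $M_{<\beta}:=\Union_{i<\beta}M_i$. Applying $\mu$-superstability to a universal cofinal chain below $\beta$, I would choose $i_0<\beta$ so that $\tp(b/M_{<\beta})$ does not $\mu$-split over $M_{i_0}$. I would then read a symmetry configuration off the tower by setting $N:=N_{i_0}$, $M_0:=M_{i_0}$, $M:=M_{<\beta}$, $a:=a_{i_0}$, and keeping the chosen $b$. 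Conditions \ref{limit sym cond}--\ref{last} hold: $M_{i_0}$ is a limit model over $N_{i_0}$ and (after passing if necessary to an intermediate limit model) $M_{<\beta}$ is universal over $M_{i_0}$; $a_{i_0}\in M_{i_0+1}\setminus M_{i_0}\subseteq M_{<\beta}\setminus M_{i_0}$ with $\tp(a_{i_0}/M_{i_0})$ non-algebraic and non-$\mu$-splitting over $N_{i_0}$; and $\tp(b/M_{<\beta})$ is non-algebraic and non-$\mu$-splitting over $M_{i_0}$ by the choice of $i_0$. Symmetry then produces a limit model $M^b$ over $M_{i_0}$ with $b\in M^b$ and $\tp(a_{i_0}/M^b)$ non-$\mu$-splitting over $N_{i_0}$. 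Since this is exactly the tower axiom at coordinate $i_0$, I can use the extension lemma to build $(\bar M',\bar a,\bar N)\geq(\bar M,\bar a,\bar N)$ with $M^b\preceq_\K M'_{i_0}$, whence $b\in M'_{i_0}\cap\Union_{j<\alpha}M_j$ while $b\notin M_{i_0}$, contradicting the reducedness condition $(*)_{i_0}$.

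For (\ref{red}) $\Rightarrow$ (\ref{sym}) I would again argue contrapositively, turning a failure of symmetry into a discontinuous reduced tower. Fix a witnessing configuration $N,M_0,M,a,b$ as in Definition \ref{sym defn}, so that no limit model over $M_0$ containing $b$ keeps $\tp(a/-)$ non-$\mu$-splitting over $N$. The two non-$\mu$-splitting relations in the configuration, namely $\tp(a/M_0)$ over $N$ and $\tp(b/M)$ over $M_0$, are precisely of the form demanded of designated tower data, and the plan is to place $a$ and $b$ at distinct coordinates of a single tower of limit length in which the designated data propagate the non-$\mu$-splitting of $a$'s type over $N$ up the tower, while $b$ is positioned so that continuity of the tower would be available only by realizing $b$ inside a limit model over $M_0$ on which $\tp(a/-)$ remains non-$\mu$-splitting over $N$. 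Because the failure of symmetry forbids any such model, no reduced extension of this tower can be continuous, yielding the required contradiction with (\ref{red}).

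The main obstacle is exactly this second construction. The tension is that \emph{any} valid tower in which $a$'s designated type is non-$\mu$-splitting over $N$ cannot have $b$ inside the base model over which $a$ sits, since such a model would be precisely the forbidden symmetric witness; hence the discontinuity cannot be produced by naively designating $a$ at the limit stage and letting $b$ be the witness. One must instead distribute $a$ and $b$ across separate coordinates and let the reducedness condition $(*)_i$, which governs how the tower interacts with its extensions, be the mechanism that forces $b$ downward into an $a$-admissible model upon any attempt to make the tower continuous. Verifying that such a tower exists, that every tower axiom holds (successive universality of the models and non-$\mu$-splitting of each designated type over its designated submodel), and that passing to a reduced extension genuinely preserves the obstruction rather than silently repairing continuity, is the delicate heart of the argument. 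A secondary, routine point present in both directions is the management of universality: extracting the universal cofinal chains needed to invoke $\mu$-superstability, and situating $M^b$ (respectively the models produced in the first direction) inside the tower with the universality required by the orderings $\leq$ and $<$.
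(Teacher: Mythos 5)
First, note that the paper does not prove this statement at all: it is quoted as a Fact, with the proof deferred to \cite{Va3-SS}. So your attempt can only be compared against that cited source, whose argument your first direction does essentially track. For (\ref{sym}) $\Rightarrow$ (\ref{red}) your skeleton (witness $b$ of discontinuity, a non-splitting base for $\tp(b/\Union_{i<\beta}M_i)$, symmetry producing $M^b$, then a tower extension violating $(*)_{i_0}$) is the right one, but two steps are asserted rather than available. Definition \ref{ss assm} applies only to chains of limit models with $M_{i+1}$ \emph{universal} over $M_i$, and a tower carries no successive universality, so your ``universal cofinal chain below $\beta$'' does not exist inside the tower for free; the known repair takes a counterexample of minimal length (so the restriction is continuous, via Fact \ref{monotonicity}) and then extends by an increasing chain of reduced continuous towers, exactly as in the proof of Theorem \ref{symmetry transfer}. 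Similarly, the tower only gives $M_{i_0}$ universal over $N_{i_0}$, while Definition \ref{sym defn}(\ref{limit sym cond}) demands a limit model over $N$; and ``enlarge a single coordinate'' understates the extension step, since every coordinate $\geq i_0$ must be rebuilt while preserving non-splitting of all later designated types (this is the machinery behind Fact \ref{density of reduced}, together with the observation, via uniqueness of non-splitting extensions, that $a_{i_0}\notin M^b$ so the extension is still a tower). These gaps are fixable with known lemmas.

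The genuine gap is direction (\ref{red}) $\Rightarrow$ (\ref{sym}), which you do not prove: you describe a plan and explicitly defer its ``delicate heart.'' Moreover, there is a concrete obstruction beyond the ones you flag. Suppose you do build a discontinuous tower from a failure configuration, with designated elements $a_i$ realizing non-splitting (over $N$) extensions of $\tp(a/M_0)$ and with $b$ at the top, and you pass to a reduced continuous extension placing $b\in M'_{i_0}$. The tower then only guarantees that $\tp(a_{i_0}/M'_{i_0})$ does not $\mu$-split over $N$ for the \emph{designated} element $a_{i_0}$, whereas symmetry requires a model containing $b$ over which the type of $a$ \emph{itself} does not split. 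The natural move --- an automorphism over $M_0$ sending $a_{i_0}$ to $a$ --- displaces $b$, so it does not produce the required $M^b$; repairing this forces an explicit automorphism argument exploiting condition (\ref{last}) of Definition \ref{sym defn} (that $\tp(b/M)$ does not $\mu$-split over $M_0$) together with uniqueness of non-splitting extensions, in the spirit of the composition $g\circ f$ in Claim \ref{star non-split} of this paper. Since your proposal never identifies this transfer step, and since it is precisely the hard half of the equivalence in \cite{Va3-SS}, the attempt as written establishes at most one implication modulo standard lemmas and leaves the converse unproved.
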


There are a few facts about reduced towers known to hold under the assumption of $\mu$-superstability. 
The following 
appears in \cite{Va1} as Theorem III.11.2.

\begin{fact}[Density of reduced towers]\label{density of reduced}\index{reduced towers!density of} Suppose $\K$ is an abstract elementary class satisfying the joint embedding and amalgamation properties.  If $\K$ is $\mu$-superstable, then there exists a reduced
$<$-extension of every tower in
$\K^*_{\mu,\alpha}$.
\end{fact}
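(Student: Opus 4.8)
The plan is to argue by contradiction, building a long increasing chain of towers whose failure to terminate will be impossible. Assume toward a contradiction that the given tower $\T=(\bar M,\bar a,\bar N)\in\K^*_{\mu,\alpha}$ has no reduced $<$-extension. I would then construct a $<$-increasing and continuous sequence of towers $\langle \T^\gamma=(\bar M^\gamma,\bar a,\bar N)\mid \gamma<\mu^+\rangle$ with $\T^0$ a $<$-extension of $\T$, arranged so that each $\T^\gamma$ is itself a $<$-extension of $\T$ (hence, by our assumption, non-reduced) and so that at each successor stage a witness to the non-reducedness of $\T^\gamma$ is absorbed into an earlier model. The aim is to show that this construction cannot be carried through all $\gamma<\mu^+$, since no stage is ever reduced.

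For the successor step, fix $\gamma$ and use that $\T^\gamma$ is non-reduced: there is a tower $\T'\ge\T^\gamma$ with the same $\bar a,\bar N$, an index $i_\gamma<\alpha$, and an element $c_\gamma\in M'_{i_\gamma}\cap\bigcup_{j<\alpha}M^\gamma_j$ with $c_\gamma\notin M^\gamma_{i_\gamma}$; since the $M^\gamma_j$ are increasing, $c_\gamma\in M^\gamma_{j_\gamma}$ for some $j_\gamma>i_\gamma$. Using amalgamation and the absence of maximal models, I would take $\T^{\gamma+1}$ to be a $<$-extension of $\T'$, so that $c_\gamma\in M^{\gamma+1}_{i_\gamma}$. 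At limit stages $\beta<\mu^+$ I would set $M^\beta_i=\bigcup_{\gamma<\beta}M^\gamma_i$; that this union is again a tower in $\K^*_{\mu,\alpha}$ is exactly where $\mu$-superstability enters. Each $M^\beta_i$ is an increasing union along a universal chain of limit models, hence again a limit model of cardinality $\mu$, and Definition \ref{ss assm} guarantees that the relevant types continue not to $\mu$-split over the fixed $N_i$, so the non-splitting and universality clauses of a tower are preserved. (Composition of universal extensions keeps each $\T^\beta$ a genuine $<$-extension of $\T$, so the non-reducedness assumption applies at every stage.)

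The main obstacle is extracting the contradiction. By pigeonhole on $\alpha$ (and since $\alpha<\mu^+$, so $|\alpha|\le\mu$), there is a single index $i^*$ and an unbounded set $S\subseteq\mu^+$ with $i_\gamma=i^*$ for all $\gamma\in S$; moreover the absorbed elements $\{c_\gamma\mid\gamma\in S\}$ are pairwise distinct, since for $\gamma<\gamma'$ in $S$ we have $c_\gamma\in M^{\gamma+1}_{i^*}\subseteq M^{\gamma'}_{i^*}$ while $c_{\gamma'}\notin M^{\gamma'}_{i^*}$. The delicate point is that a naive cardinality count is not quite enough: each $c_\gamma$ is absorbed into a model of cardinality $\mu$, but no single cardinality-$\mu$ model visibly collects all $\mu^+$ of them, because the top model $\bigcup_{j<\alpha}M^\gamma_j$ grows with $\gamma$. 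The real work, and where I expect the heart of the argument to lie, is to convert this accumulation of distinct absorbed elements at the fixed coordinate $i^*$ into a genuine contradiction with Galois-stability in $\mu$: one would control, via the non-$\mu$-splitting structure inherited along the tower, the Galois types of the $c_\gamma$ over a fixed cardinality-$\mu$ model, so that $\mu^+$ distinct such types contradict $\mu$-superstability. Pinning down this bound on the length of the construction — rather than the bookkeeping of the chain itself — is the crux.
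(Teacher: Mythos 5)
There is a genuine gap, and it sits exactly where you yourself locate "the crux." Your architecture is the standard one (this Fact is quoted from Theorem III.11.2 of \cite{Va1}, going back to Shelah--Villaveces): assume no reduced $<$-extension, build a $<$-increasing \emph{continuous} chain $\langle \T^\gamma\mid\gamma<\mu^+\rangle$ absorbing at each successor a witness $c_\gamma\in M^{\gamma+1}_{i_\gamma}\cap\Union_{j<\alpha}M^\gamma_j\setminus M^\gamma_{i_\gamma}$, fix $i^*$ by pigeonhole, and observe the $c_\gamma$ are pairwise distinct -- all of that matches. But the final contradiction is not, as you propose, a count of Galois types of the $c_\gamma$ against $\mu$-stability; no analysis of the types of the witnesses is needed at all. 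The missing idea is a pressing-down argument. Since $|\alpha|\le\mu<\cf(\mu^+)$, the fiber $S=\{\gamma\mid i_\gamma=i^*\}$ can be taken \emph{stationary}, not merely unbounded. For each limit ordinal $\gamma\in S$, continuity of the chain of towers gives $c_\gamma\in\Union_{j<\alpha}M^\gamma_j=\Union_{\beta<\gamma}\Union_{j<\alpha}M^\beta_j$, so $c_\gamma$ already appears at some stage $\beta_\gamma<\gamma$. The map $\gamma\mapsto\beta_\gamma$ is regressive on the stationary set of limit points of $S$, so Fodor's lemma yields a stationary $S'$ and a single $\beta^*$ with $c_\gamma\in\Union_{j<\alpha}M^{\beta^*}_j$ for all $\gamma\in S'$. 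That places $\mu^+$ pairwise distinct elements inside one fixed set of cardinality $\mu$ (as $|\alpha|\le\mu$ and each model has size $\mu$) -- a flat counting contradiction. Your worry that "no single cardinality-$\mu$ model visibly collects all $\mu^+$ of them" is answered by pressing down on the \emph{source} stage of each $c_\gamma$, not its target; the growth of the top model is irrelevant, and your auxiliary pigeonhole fixing $j^*$ is unnecessary. Stability is used to keep the construction alive, not to close it.

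Two secondary glosses in your write-up also deserve flagging, though both are repairable by standard lemmas. At limit stages, Definition \ref{ss assm} applied to the chain $\langle M^\gamma_i\mid\gamma<\delta\rangle$ only gives that $\tp(a_i/M^\delta_i)$ does not $\mu$-split over \emph{some} $M^{\gamma}_i$ on the chain, not over $N_i$; to transfer the base down to $N_i$ one combines monotonicity of non-splitting (the non-$\mu$-splitting-over-$N_i$ extension of $\tp(a_i/M^{\gamma+1}_i)$ also does not split over the larger base $M^\gamma_i$) with uniqueness of non-$\mu$-splitting extensions over $M^\gamma_i$, using that $M^{\gamma+1}_i$ is universal over $M^\gamma_i$ -- your one-line appeal to the definition skips this. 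Similarly, at successors, producing a $<$-extension of $\T'$ is not just "amalgamation and no maximal models": one must choose the universal extensions $M^{\gamma+1}_i$ so that $\tp(a_i/M^{\gamma+1}_i)$ still does not $\mu$-split over $N_i$, which uses the extension and uniqueness properties of non-$\mu$-splitting available under $\mu$-superstability (the existence-of-$<$-extensions lemma of \cite{Va1}). Neither of these sinks the proposal, but without the Fodor step the argument as written does not terminate in a contradiction.
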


The next lemma is Lemma III.11.5 in \cite{Va1}.
\begin{fact}\label{monotonicity}
Suppose $\K$ is a $\mu$-superstable abstract elementary class satisfying the joint embedding and amalgamation properties. 
Suppose that $(\bar M,\bar a,\bar N)\in\K^*_{\mu,\alpha}$ is
reduced.  If  $\beta<\alpha$, then $(\bar M,\bar
a,\bar N)\restriction \beta$ is reduced.
\end{fact}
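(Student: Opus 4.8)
The plan is to verify reducedness of the restriction $(\bar M,\bar a,\bar N)\restriction\beta$ by lifting an arbitrary witness extension of the initial segment to a witness extension of the whole tower, applying the hypothesis that $(\bar M,\bar a,\bar N)$ is reduced, and then pushing the conclusion back down. Concretely, fix $\beta<\alpha$ and an arbitrary $(\bar M',\bar a\restriction\beta,\bar N\restriction\beta)\in\K^*_{\mu,\beta}$ with $(\bar M,\bar a,\bar N)\restriction\beta\leq(\bar M',\bar a\restriction\beta,\bar N\restriction\beta)$; I must check $(*)_i$ for every $i<\beta$. The heart of the argument is to produce a tower $(\bar M'',\bar a,\bar N)\in\K^*_{\mu,\alpha}$ with $M''_i=M'_i$ for all $i<\beta$ and $(\bar M,\bar a,\bar N)\leq(\bar M'',\bar a,\bar N)$.

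I would build the models $\langle M''_i\mid\beta\leq i<\alpha\rangle$ by recursion, having set $M''_i=M'_i$ for $i<\beta$ (so that the $\leq$-extension conditions for indices below $\beta$ are inherited from $(\bar M,\bar a,\bar N)\restriction\beta\leq(\bar M',\bar a\restriction\beta,\bar N\restriction\beta)$). At stage $i\geq\beta$ the previously built $\langle M''_j\mid j<i\rangle$ form a $\preceq_{\K}$-increasing chain lying above $\bar M\restriction i$, so both $\bigcup_{j<i}M_j\preceq_{\K}M_i$ and $\bigcup_{j<i}M_j\preceq_{\K}\bigcup_{j<i}M''_j$ hold; amalgamating $M_i$ and $\bigcup_{j<i}M''_j$ over $\bigcup_{j<i}M_j$ yields a model with $M_i\preceq_{\K}M''_i$ and $M''_j\preceq_{\K}M''_i$ for all $j<i$, and by Galois-stability in $\mu$ I may take $M''_i$ universal over $M_i$, whence $M''_i$ is universal over $N_i$ as well. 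The delicate point is the non-splitting clause: I must arrange the amalgamations so that each designated element $a_i$ satisfies $a_i\in M''_{i+1}\setminus M''_i$ and $\tp(a_i/M''_i)$ does not $\mu$-split over $N_i$. For this I would realize $a_i$ over the enlarged $M''_i$ by the non-$\mu$-splitting extension of $\tp(a_i/M_i)$, invoking the existence of such extensions together with their uniqueness over the universal pair $N_i\preceq_{\K}M_i$ — both standard consequences of $\mu$-superstability. The outcome is the required $(\bar M'',\bar a,\bar N)\in\K^*_{\mu,\alpha}$ extending $(\bar M,\bar a,\bar N)$.

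With the lift in hand, reducedness of $(\bar M,\bar a,\bar N)$ applied to $(\bar M'',\bar a,\bar N)$ gives $M''_i\cap\bigcup_{j<\alpha}M_j=M_i$ for every $i<\alpha$. Restricting to $i<\beta$ and using $M'_i=M''_i$ together with $\bigcup_{j<\beta}M_j\subseteq\bigcup_{j<\alpha}M_j$, I obtain
\[
M_i\subseteq M'_i\cap\bigcup_{j<\beta}M_j=M''_i\cap\bigcup_{j<\beta}M_j\subseteq M''_i\cap\bigcup_{j<\alpha}M_j=M_i,
\]
so $M'_i\cap\bigcup_{j<\beta}M_j=M_i$, which is exactly $(*)_i$. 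Since $(\bar M',\bar a\restriction\beta,\bar N\restriction\beta)$ was arbitrary, $(\bar M,\bar a,\bar N)\restriction\beta$ is reduced.

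I expect the main obstacle to be the recursive construction of the tail, and within it the preservation of the non-$\mu$-splitting clause while amalgamating $M_i$ with the already-enlarged lower part and keeping $a_i$ outside $M''_i$; this is precisely the step where $\mu$-superstability (through the existence and uniqueness of non-splitting extensions over universal models) is indispensable. By contrast, the descent from reducedness of the full tower to $(*)_i$ on the initial segment, displayed above, is routine.
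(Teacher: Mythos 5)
Your closing ``descent'' computation is correct, and the overall shape of your plan (lift the extension of the truncated tower to an extension of the full tower, apply reducedness, push $(*)_i$ back down) is the right one; note that the paper itself does not prove this fact but quotes it from \cite{Va1} (Lemma III.11.5), so your attempt has to be judged against that argument. The genuine gap is in the lift. You insist on $M''_j=M'_j$ pointwise for all $j<\beta$, and then claim that at stages $i\geq\beta$ you can arrange $\tp(a_i/M''_i)$ not to $\mu$-split over $N_i$ by ``realizing $a_i$ over the enlarged $M''_i$ by the non-$\mu$-splitting extension of $\tp(a_i/M_i)$.'' But $a_i$ is a \emph{fixed} designated element of the original tower (extensions must keep $\bar a$ and $\bar N$ literally the same), so once $M''_i$ is fixed pointwise you have no freedom whatsoever over which extension of $\tp(a_i/M_i)$ the element $a_i$ realizes; uniqueness of non-splitting extensions identifies the one admissible type but gives no reason $a_i$ realizes it. The standard device --- take $M^0$ universal over $M_i$, realize the non-splitting extension by some $b^0$, and pull $b^0$ onto $a_i$ by an automorphism over $M_i$ --- moves $\Union_{j<i}M''_j$, which your construction requires to sit literally inside $M''_i$. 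Worse, the lift as you specify it need not exist at all at the very first new stage $i=\beta$: a $\leq$-extension of $(\bar M,\bar a,\bar N)\restriction\beta$ places no constraint on the type of $a_\beta$ over $\Union_{j<\beta}M'_j$ (the truncated tower never mentions $a_\beta$, and universal extensions of the $M_j$ realize every type over $M_j$, including ones correlated with $a_\beta$), so $\tp(a_\beta/\Union_{j<\beta}M'_j)$ may $\mu$-split over $N_\beta$; by monotonicity of $\mu$-splitting there is then \emph{no} admissible $M''_\beta\supseteq\Union_{j<\beta}M'_j$, hence no tower $(\bar M'',\bar a,\bar N)$ with $M''_j=M'_j$ for $j<\beta$.

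The repair --- and the reason the fact is true --- is that you do not need to preserve $\bar M'$ itself, only the witness to non-reducedness. If the restriction is not reduced, fix $i<\beta$ and $b\in\bigl(M'_i\cap\Union_{j<\beta}M_j\bigr)\setminus M_i$. The crucial observation is that $b$, like all of $\bar M$, $\bar a$, $\bar N$, lies in the universe of the \emph{original} tower (indeed $b\in\Union_{j<\beta}M_j\preceq_{\K}M_\beta$, so past $\beta$ the containment of $b$ is automatic). Consequently every condition you must maintain --- $M_k\preceq_{\K}M''_k$, universality, non-$\mu$-splitting of $\tp(a_k/M''_k)$ over $N_k$, and $b\in M''_i$ --- is invariant under maps fixing the original tower's models pointwise. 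This licenses re-embedding the already-built initial segment (images of $\bar M'$ and of earlier stages) over $\Union_{j<k}M_j$ at each stage $k$, steering the construction away from the splitting obstruction while retaining $b$; the bookkeeping is a coherent system of embeddings with a direct limit in the spirit of Fact \ref{direct limit lemma}, not a literal increasing chain as you propose. One then obtains an extension of the full tower whose $i$-th model contains $b\in\Union_{j<\alpha}M_j\setminus M_i$, contradicting reducedness of $(\bar M,\bar a,\bar N)$. So: your down-transfer is routine, as you say, but the lift is precisely where the content of this fact lives, and as written your recursion both conflicts with keeping $M'_i$ fixed and can fail to get off the ground at stage $\beta$.
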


Before moving onto the proofs of Theorem \ref{main theorem} and Theorem \ref{symmetry transfer}, we state a fact about direct limits that we will use in Section \ref{sec:main theorem}.  It is implicit in the proof of Lemma 2.12 of \cite{GV}.

\begin{fact}\label{direct limit lemma}
Suppose that $\theta$ is a limit ordinal and $\langle M_i\in\K_{\mu}\mid i<\theta\rangle$ and $\langle f_{i,j}\mid i\leq j<\theta\rangle$ form a directed system.  If $\langle N_i\mid i<\theta\rangle$ is an increasing and continuous sequence of models so that for every $i<\theta$, $N_i\prec_{\K}M_i$ and $f_{i,i+1}\restriction N_i=\id_{N_i}$, then there is a direct limit $M^*$ of the system  and $\K$-embeddings $\langle f_{i,\theta}\mid i<\theta\rangle$  so that $\Union_{i<\theta}N_i\preceq_{\K}M^*$ and $f_{i,\theta}\restriction N_i=id_{N_i}$.

\end{fact}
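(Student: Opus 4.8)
The plan is to build the direct limit concretely as the union of a continuous $\preceq_{\K}$-increasing chain of isomorphic copies of the $M_i$, arranged so that these copies literally share the common submodels $N_i$. Concretely, I would construct by recursion on $i<\theta$ a continuous increasing chain $\langle M^*_i\mid i<\theta\rangle$ of models in $\K_\mu$ together with isomorphisms $g_i\colon M_i\to M^*_i$ maintaining three invariants: (a) $N_i\preceq_{\K}M^*_i$; (b) $g_i\restriction N_i=\id_{N_i}$; and (c) for all $j\leq i$, $g_i\circ f_{j,i}=g_j$ after identifying $M^*_j$ with its image in $M^*_i$. Granting the recursion, set $M^*:=\Union_{i<\theta}M^*_i$; by the union-of-chains axiom $M^*\in\K$ and each $M^*_i\preceq_{\K}M^*$. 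Defining $f_{i,\theta}$ to be $g_i$ followed by the inclusion $M^*_i\hookrightarrow M^*$, invariant (c) gives the coherence $f_{i,\theta}=f_{j,\theta}\circ f_{i,j}$ and $M^*=\Union_{i<\theta}f_{i,\theta}(M_i)$, so $M^*$ is a direct limit of the system. Invariants (a) and (b) together with the continuity of $\langle N_i\mid i<\theta\rangle$ then give $\Union_{i<\theta}N_i\preceq_{\K}M^*$ with $f_{i,\theta}\restriction N_i=\id_{N_i}$, as desired.

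For the recursion, the limit stages are easy: at a limit $\delta<\theta$ I would take $M^*_\delta:=\Union_{i<\delta}M^*_i$ and let $g_\delta$ be the induced isomorphism, and invariants (a)--(c) pass to the limit using continuity of $\langle N_i\rangle$ and the union axiom. The content is at the successor step. Given $g_i\colon M_i\to M^*_i$, I would amalgamate $M^*_i$ and $M_{i+1}$ over $M_i$ along the maps $g_i$ and $f_{i,i+1}$ (using that $\K$ has amalgamation), choosing a disjoint amalgam and renaming so that the copy of $M^*_i$ is an honest $\preceq_{\K}$-extension $M^*_{i+1}\succeq_{\K}M^*_i$ and $g_{i+1}\colon M_{i+1}\to M^*_{i+1}$ satisfies (c). The new requirement is (b): I need $g_{i+1}$ to fix $N_{i+1}$ pointwise. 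On $N_i$ this is automatic, since $f_{i,i+1}\restriction N_i=\id_{N_i}$ and (b) at stage $i$ force $g_{i+1}\restriction N_i=\id_{N_i}$; for the elements of $N_{i+1}\setminus N_i$ I would use the freedom in the disjoint amalgam to rename their images to themselves.

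The main obstacle is exactly this last compatibility, and tracing it back shows that the real point is to keep the growing common submodel $\Union_{i<\theta}N_i$ fixed by \emph{every} map of the system, i.e.\ to upgrade the hypothesis $f_{i,i+1}\restriction N_i=\id_{N_i}$ at successor indices to $f_{i,j}\restriction N_i=\id_{N_i}$ for all $i\leq j<\theta$. This is what guarantees that the elements of $N_{i+1}$ constrained by coherence with $f_{i,i+1}$ are precisely those of $N_i$ (so that renaming the remaining elements of $N_{i+1}$ to themselves is consistent), and it is also what makes $\Union_{i<\theta}N_i$ sit inside $M^*$ undisturbed. Propagating this identity condition through the limit stages of the construction, using continuity of $\langle N_i\rangle$, is the delicate bookkeeping; once it is in place, the verification that $M^*$ is a genuine direct limit and that $\Union_{i<\theta}N_i\preceq_{\K}M^*$ with each $f_{i,\theta}$ fixing $N_i$ is routine from the abstract elementary class axioms.
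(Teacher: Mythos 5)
The paper offers no internal proof of this Fact---it is quoted as implicit in the proof of Lemma 2.12 of \cite{GV}---so your attempt can only be judged on its own terms. Your overall strategy (realize the direct limit as the union of a continuous $\preceq_{\K}$-increasing chain of renamed copies $M^*_i\cong M_i$) is the standard and correct route, but your successor step has a genuine gap: the ``freedom in the disjoint amalgam'' you invoke to send the elements of $N_{i+1}\setminus N_i$ to themselves does not exist for those elements that lie in the range of $f_{i,i+1}$. If $a\in N_{i+1}\setminus N_i$ with $a=f_{i,i+1}(b)$ for some $b\in M_i\setminus N_i$, then your coherence invariant (c) forces $g_{i+1}(a)=g_i(b)$, an element of $M^*_i$ already fixed at the previous stage and in general different from $a$; renaming it ``to itself'' would mean retroactively altering $g_i$ and $M^*_i$, which an increasing-chain construction does not permit. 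Your proposed remedy does not close this: even upgrading the hypothesis to $f_{i,j}\restriction N_i=\id_{N_i}$ for all $i\leq j<\theta$ does \emph{not} make ``the elements of $N_{i+1}$ constrained by coherence precisely those of $N_i$''---that hypothesis only governs where the maps send the $N$-chain and says nothing about elements of $M_i\setminus N_i$, which may perfectly well be sent by $f_{i,i+1}$ into $N_{i+1}$.

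The repair is to rename once at the end rather than stage by stage: drop invariant (b), build the limit $M^*$ as a plain union of renamed copies with cone maps $f_{i,\theta}$, and observe that under the all-pairs identity condition the images cohere on the $N$-chain (for $x\in N_i$ one has $f_{j,\theta}(x)=f_{j,\theta}(f_{i,j}(x))=f_{i,\theta}(x)$), so $\langle f_{i,\theta}(N_i)\mid i<\theta\rangle$ is an increasing chain of $\preceq_{\K}$-submodels of $M^*$ and $\Union_{i<\theta}f_{i,\theta}(N_i)\preceq_{\K}M^*$ by the chain axioms; a single application of the isomorphism axiom then pulls this submodel back pointwise onto $\Union_{i<\theta}N_i$. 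Two further remarks. First, you are right that the successor-only hypothesis as literally printed is too weak---coherence forces $f_{i,\theta}=f_{\delta,\theta}\circ f_{i,\delta}$ through limit indices $\delta<\theta$, so if a given limit map $f_{i,\delta}$ moves a point of $N_i$ the conclusion is unobtainable---but this condition must be \emph{assumed} of the given system (as the paper's application indeed arranges: condition (4) in the proof of Theorem \ref{saturated theorem} demands $f_{i,j}\restriction N_i=\id_{N_i}$ for all $i\leq j$); it cannot be ``propagated'' by the proof, since the maps $f_{i,j}$ are data, not objects under construction. Second, amalgamation is never needed: since each $g_i$ is an isomorphism, the successor step is a pure renaming along $g_i\circ f_{i,i+1}^{-1}$, and the fact holds in an arbitrary AEC.
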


\section{Limit and Saturated Models}\label{sec:limit and sat}

In this section we establish that for $\mu$-superstable and $\mu$-symmetric abstract elementary classes, limit models are in fact saturated.
  We begin by noticing that a
 $(\mu,\mu^+)$-limit model\footnote{$M$ is a $(\mu,\mu^+)$-limit model if $M=\Union_{i<\mu^+}M_i$ for some increasing and continuous sequence of models $\langle M_i\in\K_{\mu}\mid i<\mu^+\rangle$ where $M_{i+1}$ is universal over $M_i$ for each $i<\mu^+$.} is isomorphic to a $(\mu^+,\mu^+)$-limit model in $\mu^+$-stable abstract elementary classes.

\begin{proposition}\label{mu-plus-limit}
If $\K$ is $\mu^+$-stable and does not have a maximal model of cardinality $\mu^+$, then any $(\mu,\mu^+)$-limit model is a $(\mu^+,\mu^+)$-limit model.
\end{proposition}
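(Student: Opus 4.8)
The plan is to route the argument through saturation and the uniqueness of saturated models, rather than by directly reorganizing the defining chain into $\mu^+$-sized pieces (which is awkward, since every proper initial segment of the chain has cardinality only $\mu$). Write the given $(\mu,\mu^+)$-limit as $M=\Union_{i<\mu^+}M_i$ with $\langle M_i\in\K_\mu\mid i<\mu^+\rangle$ increasing and continuous and $M_{i+1}$ universal over $M_i$. I will show that $M$ is $\mu^+$-saturated, observe that a $(\mu^+,\mu^+)$-limit model is also $\mu^+$-saturated, and then invoke the uniqueness of saturated models of cardinality $\mu^+$ to produce an isomorphism that transports a $(\mu^+,\mu^+)$-limit presentation onto $M$.

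First I would verify that $M$ is $\mu^+$-saturated. Fix $N\preceq_{\K}M$ with $\|N\|\leq\mu$ and a type $p\in\gaS(N)$. Since $\mu^+$ is regular and the chain is continuous, every element of $N$ lies in some $M_i$ and the supremum of the relevant indices is below $\mu^+$, so $|N|\subseteq|M_\delta|$ for some $\delta<\mu^+$; by the coherence axiom $N\preceq_{\K}M_\delta$. Using amalgamation, extend $p$ to a type $p'\in\gaS(M_\delta)$. Because $M_{\delta+1}$ is universal over $M_\delta$, every type over $M_\delta$ is realized in $M_{\delta+1}$ (embed a cardinality-$\mu$ extension of $M_\delta$ realizing $p'$ into $M_{\delta+1}$ over $M_\delta$), so $p'$, and hence $p=p'\restriction N$, is realized in $M_{\delta+1}\preceq_{\K}M$. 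Thus $M$ realizes all Galois types over its $\preceq_{\K}$-submodels of cardinality at most $\mu$, i.e.\ $M$ is $\mu^+$-saturated. Note that this step uses only regularity of $\mu^+$, universality along the chain, and amalgamation.

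Next, using $\mu^+$-stability together with the absence of maximal models of cardinality $\mu^+$, build a $(\mu^+,\mu^+)$-limit model $M^*=\Union_{j<\mu^+}M^*_j$: the standard existence of universal extensions in $\K_{\mu^+}$ supplies the successor steps, unions handle limits, and the same cofinality argument as above shows $M^*$ is $\mu^+$-saturated. Both $M$ and $M^*$ are then $\mu^+$-saturated members of $\K_{\mu^+}$, so by the uniqueness of saturated models of a fixed cardinality (a back-and-forth using $\mu^+$-model-homogeneity, available from amalgamation and joint embedding) there is an isomorphism $g\colon M^*\to M$. Finally, $\langle g[M^*_j]\mid j<\mu^+\rangle$ is increasing, continuous, consists of models of cardinality $\mu^+$ with $g[M^*_{j+1}]$ universal over $g[M^*_j]$, and has union $M$, which exhibits $M$ as a $(\mu^+,\mu^+)$-limit model. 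I expect the only genuine obstacle to be Step two's appeal to universal extensions in $\K_{\mu^+}$: this is exactly where $\mu^+$-stability and the no-maximal-models hypothesis enter, and one must confirm that the saturation of both presentations is enough to feed the uniqueness theorem. The transport of the chain across $g$ is then routine, since isomorphisms preserve $\preceq_{\K}$, universality, continuity, and unions.
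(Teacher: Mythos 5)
Your proof is correct, but it takes a genuinely different route from the paper's. You factor the statement through saturation: first you show the $(\mu,\mu^+)$-limit model is $\mu^+$-saturated via the cofinality-plus-universality argument (this step is sound, and in fact the paper itself remarks right after the proposition that a $(\mu,\mu^+)$-limit model is trivially saturated), then that a $(\mu^+,\mu^+)$-limit model is likewise $\mu^+$-saturated, and finally you invoke uniqueness of $\mu^+$-saturated models of cardinality $\mu^+$ to transport a $(\mu^+,\mu^+)$-limit presentation onto $M$. That last step rests on Shelah's equivalence of $\lambda$-saturation with $\lambda$-model-homogeneity, which is legitimately available here since $\mu^+>\LS(\K)$ and the paper assumes amalgamation and joint embedding throughout, but it is a heavier imported fact than anything the paper uses. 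The paper's own proof is more self-contained: it fixes a $(\mu^+,\mu^+)$-limit model $N$ (existence from $\mu^+$-stability and no maximal models in $\mu^+$, exactly where your step two uses these hypotheses), arranges without loss of generality that each $M_{i+1}$ is a $(\mu,\omega)$-limit over $M_i$, and then builds by transfinite recursion an increasing chain of embeddings $f_i\colon M_i\rightarrow N$ whose ranges catch an enumeration $\{a_i\mid i<\mu^+\}$ of $N$, so that $f_{\mu^+}$ is onto; the successor step needs only the elementary back-and-forth uniqueness of $(\mu,\omega)$-limit models over a common base, and saturation is never mentioned. What each approach buys: yours is shorter and modular, and generalizes immediately to any chain presentation whose union can be shown saturated; the paper's stays entirely inside the limit-model calculus, which matches its program of working from local hypotheses without appealing to the saturation/model-homogeneity machinery. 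Both arguments deliver exactly the stated conclusion, namely a plain isomorphism rather than one over a designated base.
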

\begin{proof}
Let $M_{\mu^+}$ be a $(\mu,\mu^+)$-limit model witnessed by $\langle M_i\mid i\leq\mu^+\rangle$.  Without loss of generality $M_{i+1}$ is a $(\mu,\omega)$-limit over $M_i$.
By $\mu^+$-stability, we can fix $N$ a $(\mu^+,\mu^+)$-limit model witnessed by $\langle N'_i\mid i\leq\mu^+\rangle$ so that $M_0\prec_{\K}N_0$.  Fix $\{a_i\mid i<\mu^+\}$ to be an enumeration of $N$.  We will define an increasing and continuous sequence $\langle f_i\mid i\leq\mu^+\rangle$ so that
\begin{enumerate}
\item for $i<j\leq\mu^+$, $f_i=f_j\restriction M_i$
\item for $i\leq \mu^+$ limit, $f_i=\Union_{j<i}f_j\restriction M_j$
\item $f_i:M_i\rightarrow N$
\item $a_i\in\text{range}(f_{i+1}\restriction M_{i+1})$
\end{enumerate}

Take $f_0=\id$.  For $i$ limit, by the continuity of $\bar M$, we can take $f_i:=\Union_{j<\mu^+}f_j\restriction M_j$.  For the successor case $i=j+1$, fix $\grave f_j\in\Aut(\C)$ an extension of $f_j$.    
Let $k<\mu^+$ be such that  $a_j\in N_k$ and $N_k$ is universal over $\grave f_j(M_j)$.
 Let $M'_{j+1}\prec_{\K}\grave f^{-1}_j(N_k)$ be a $(\mu,\omega)$-limit over  $M_i$ containing $\grave f^{-1}_j(a_j)$.  This is possible since $N_k$ is universal over $\grave f_j(M_j)$.
  By the uniqueness of $(\mu,\omega)$-limit models, there exists $g:M'_{j+1}\cong_{M_j}M_{j+1}$.  Now take $f_{j+1}:=\grave f_j\circ g^{-1}\restriction M_{j+1}$.  Notice $f_{j+1}\restriction M_{j}=f_j\restriction M_j$ since $g^{-1}$ fixes $M_j$.  Also, by our choice of $g$, $a_j\in f_{j+1}[M_{j+1}]$ as required.  
  
  Notice that $f_{\mu^+}$ is an isomorphism between $M_{\mu^+}$ and $N$.

\end{proof}

The direct approach of constructing  a saturated model is to realize all the relevant types.  Another method is to show that the model is a limit model and depending on the context, there are times when limit models are saturated.  Trivially, a $(\mu,\mu^+)$-limit model is saturated.  Moreover, if the class $\K$ satisfies the condition 
 \begin{quote}
for every  $l\in\{1,2\}$, and every pair of limit ordinals $\theta_l<\mu^+$, and pair of  $(\mu,\theta_l)$-limit models $M_l$,
we have $M_1\cong M_2$, 
 \end{quote}
 then any limit model of cardinality $\mu$ is also saturated.  To see this, suppose $M$ is a $(\mu,\theta)$-limit model and fix  $\chi<\mu$ and $N\in\K_\chi$ with $N\prec_{\K}M$.  By uniqueness of limit models, we can think of  $M$ as $(\mu,\chi^+)$-limit model witnessed by $\langle M_i\mid i<\chi^+\rangle$.  The model $N$ appears in one of the $M_i$, so $M_{i+1}$ will realize all the types over $M_i$, and hence over $N$.
  
 In our context, under the hypothesis of Theorem \ref{main theorem}, we have uniqueness of limit models of cardinality $\mu^+$:

 \begin{theorem}\label{uniqueness thm}
 Let $\K$ be an abstract elementary class which satisfies the joint embedding and amalgamation properties.  
 Suppose $\mu$ is a cardinal $\geq\LS(\K)$ and $\theta_1$ and $\theta_2$ are limit ordinals $<\mu^+$.
 If $\K$ is $\mu$-superstable and satisfies $\mu$-symmetry, then for $M_1$ and $M_2$ which are $(\mu, \theta_1)$ and $(\mu,\theta_2)$-limit models over $N$, respectively, we have that 
 $M_1$ is isomorphic to $M_2$ over $N$.  Moreover the limit model of cardinality $\mu$ is saturated.
 \end{theorem}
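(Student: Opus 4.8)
The plan is to reduce the general uniqueness statement to the case of equal cofinalities, which is handled by a direct back-and-forth, and then to bridge unequal cofinalities using the continuity of reduced towers that $\mu$-symmetry supplies through Fact \ref{symmetry theorem}. First I would record two reductions. If $M_1$ is a $(\mu,\theta_1)$-limit over $N$ witnessed by $\langle M^1_i\mid i\leq\theta_1\rangle$, then choosing an increasing continuous sequence $\langle i_j\mid j<\cf(\theta_1)\rangle$ cofinal in $\theta_1$ with $i_0=0$ shows that $M_1$ is also a $(\mu,\cf(\theta_1))$-limit over the same base $N$: continuity of the original chain gives $M^1_{i_j}=\Union_{k<j}M^1_{i_k}$ at limits, and $M^1_{i_{j+1}}$ is universal over $M^1_{i_j}$ since the intermediate stage $M^1_{i_j+1}$ already is. Hence we may assume $\theta_1,\theta_2$ are regular cardinals. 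When in addition $\cf(\theta_1)=\cf(\theta_2)$, the models $M_1$ and $M_2$ are isomorphic over $N$ by the standard back-and-forth: starting from $\id_N$ and alternately invoking that the successive models of each witnessing chain are universal over their predecessors, one assembles an increasing union of partial $\K$-embeddings fixing $N$, exactly as in \cite{ShVi} and \cite{GVV}. This step uses only $\mu$-stability and universality, not symmetry.

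The heart of the matter is therefore the case of distinct regular cofinalities, say a $(\mu,\omega)$-limit against a $(\mu,\delta)$-limit with $\delta$ regular uncountable; this is the step I expect to be the main obstacle. Here I would invoke Fact \ref{symmetry theorem}: since $\K$ is $\mu$-superstable and satisfies $\mu$-symmetry, every reduced tower in $\K^*_{\mu,\alpha}$ has a continuous underlying sequence of models. The strategy is to exhibit a single limit model presentable with both cofinalities simultaneously. Concretely, I would build a reduced tower of length $\delta$ over $N$, using density of reduced towers (Fact \ref{density of reduced}) to produce it and monotonicity (Fact \ref{monotonicity}) to keep every initial segment reduced, and then run a back-and-forth against a $(\mu,\omega)$-limit. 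The continuity granted by symmetry is precisely what makes the limit stages behave: at each limit ordinal the union of the tower coincides with the next model of the tower, so no elements are lost, and the partial isomorphisms built below a limit extend to an isomorphism of the unions. This is the mechanism by which continuity of reduced towers upgrades an equal-cofinality argument to an isomorphism across cofinalities, following the template of \cite{GVV} and \cite{Va2}. The delicate point to verify is exactly that the continuity of Fact \ref{symmetry theorem} is enough to carry these limit stages; everything else in the construction is routine universality bookkeeping. Combining this with the equal-cofinality case yields $M_1\cong M_2$ over $N$ for arbitrary limit ordinals $\theta_1,\theta_2<\mu^+$.

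Finally, the \emph{moreover} clause follows from uniqueness by the observation made just before the theorem. Having shown that all $(\mu,\theta)$-limit models over a common base are isomorphic over that base for every limit $\theta<\mu^+$, fix a limit model $M$ of cardinality $\mu$ and a submodel $N'\prec_{\K}M$ with $\|N'\|=\chi<\mu$. By uniqueness we may re-present $M$ as a $(\mu,\chi^+)$-limit model witnessed by $\langle M_i\mid i<\chi^+\rangle$ in which $N'$ appears at some stage $M_i$; then $M_{i+1}$, being universal over $M_i$, realizes every Galois type over $M_i$ and hence over $N'$. Thus $M$ realizes all Galois types over its submodels of size $<\mu$, so $M$ is saturated. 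In summary, the genuine content is concentrated entirely in the cross-cofinality step and in checking that Fact \ref{symmetry theorem} is the precise ingredient needed there; the reductions, the equal-cofinality back-and-forth, and the passage from uniqueness to saturation are all standard given the setup.
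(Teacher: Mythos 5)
Your reductions to regular cofinalities, your equal-cofinality back-and-forth, and your derivation of the \emph{moreover} clause from uniqueness are all correct and standard (the last is exactly the observation the paper records just before the theorem). For calibration: the paper does not re-prove this theorem at all --- its proof is the single line that the statement ``is just a restatement of Theorem 5 of \cite{Va3-SS} and the proof of Theorem 1.9 of \cite{GVV}.'' So what you are attempting to reconstruct blind is precisely the content of those citations, and the test is whether your cross-cofinality step matches what that machinery actually does.

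It does not, and the gap there is genuine. Your concrete plan --- ``build a reduced tower of length $\delta$ over $N$ \ldots{} and then run a back-and-forth against a $(\mu,\omega)$-limit'' --- fails for two reasons. First, the union of a single tower, reduced and continuous or not, is not presented as a limit model: in a tower $(\bar M,\bar a,\bar N)\in\K^*_{\mu,\delta}$ the model $M_{j+1}$ is only required to be a $\K$-extension of $M_j$ containing $a_j$ (universality is required only of $M_j$ over the designated submodel $N_j$), so there is no $(\mu,\delta)$-limit presentation to play against the $(\mu,\omega)$-limit. Second, even granted two limit presentations, a back-and-forth between chains of cofinality $\omega$ and cofinality $\delta$ cannot be organized at all: after $\omega$ rounds one side is exhausted and the other is not. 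This impossibility is the entire difficulty of the uniqueness-of-limit-models problem; it is not a ``delicate point to verify'' but the place where a different idea is needed. The idea that realizes your (correct) slogan of ``one model, two presentations'' is two-dimensional: build a $<$-increasing, continuous chain $\langle \T^i\in\K^*_{\mu,\theta_1}\mid i<\theta_2\rangle$ of reduced towers, using Fact \ref{density of reduced} and Fact \ref{monotonicity} to keep them reduced and $\mu$-superstability so that coordinatewise unions at limit $i$ are still towers. Writing $M^i_j$ for the models of $\T^i$, slicing the array one way exhibits $\Union_{i<\theta_2}\Union_{j<\theta_1}M^i_j$ as the union of a continuous $\theta_2$-chain with universal successors, hence a $(\mu,\theta_2)$-limit over $N$; slicing it the other way exhibits the same model as built along a $\theta_1$-chain, and it is exactly here --- to see that $\langle\Union_{i<\theta_2}M^i_j\mid j<\theta_1\rangle$ is continuous at limit $j$ --- that continuity of reduced towers (Fact \ref{symmetry theorem}, i.e.\ $\mu$-symmetry) is consumed, with the remaining universality bookkeeping (the ``relatively full'' towers of \cite{GVV}) handled there. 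Your equal-cofinality back-and-forth, applied twice to this one model, then finishes. Without the array, the continuity statement has nothing to act on, so the main case of your proposal does not go through as written.
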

\begin{proof}
 This is just a restatement of Theorem 5 of \cite{Va3-SS} and the proof of Theorem 1.9 of \cite{GVV}.
\end{proof}

 Combining Theorem \ref{uniqueness thm} with Proposition \ref{mu-plus-limit}, we get the following corollary.  
  \begin{corollary}\label{limit is sat}
 Let $\K$ be an abstract elementary class which satisfies the joint embedding and amalgamation properties.  
 Suppose $\kappa$ is a cardinal $\geq\LS(\K)$,   and $\theta$ is  limit ordinal $<\kappa^{++}$.
 
 If $\K$ is $\kappa$-stable, $\kappa^+$-superstable and satisfies $\kappa^+$-symmetry, then any $(\kappa^+,\theta)$-limit model is also a $(\kappa,\kappa^+)$-limit model.
 \end{corollary}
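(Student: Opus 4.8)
The plan is to chain the two cited results, using Theorem~\ref{uniqueness thm} to ``reverse'' the implication supplied by Proposition~\ref{mu-plus-limit}. First I would check that the hypotheses line up: $\kappa^+$-superstability includes Galois-stability in $\kappa^+$ (built into Definition~\ref{ss assm}), so $\K$ is $\kappa^+$-stable; and since $\kappa\geq\LS(\K)$ we also have $\kappa^+\geq\LS(\K)$. Thus Proposition~\ref{mu-plus-limit} applies with its $\mu$ taken to be $\kappa$, while Theorem~\ref{uniqueness thm} applies with its $\mu$ taken to be $\kappa^+$ (here $\mu^+=\kappa^{++}=(\kappa^+)^+$).

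Next I would manufacture a single witness model. Using $\kappa$-stability, which guarantees universal extensions over models of cardinality $\kappa$, I would build an increasing continuous chain $\langle M_i\in\K_\kappa\mid i<\kappa^+\rangle$ with each $M_{i+1}$ universal over $M_i$; its union $M^*:=\Union_{i<\kappa^+}M_i$ has cardinality $\kappa^+$ and is by construction a $(\kappa,\kappa^+)$-limit model. Applying Proposition~\ref{mu-plus-limit} with $\mu=\kappa$ (invoking $\kappa^+$-stability and the standing assumption of no maximal model of cardinality $\kappa^+$), this same $M^*$ is \emph{also} a $(\kappa^+,\kappa^+)$-limit model.

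Finally I would propagate this representation via uniqueness of limit models of cardinality $\kappa^+$. Let $M$ be an arbitrary $(\kappa^+,\theta)$-limit model with $\theta<\kappa^{++}$ a limit ordinal. Since $\kappa^+$ is itself a limit ordinal below $\kappa^{++}$, both $M$ and $M^*$ are limit models of cardinality $\kappa^+$ of limit length $<\kappa^{++}$, so Theorem~\ref{uniqueness thm} (with $\mu=\kappa^+$) yields an isomorphism $M\cong M^*$. As the property of being a $(\kappa,\kappa^+)$-limit model is invariant under isomorphism and $M^*$ is such a model, $M$ is a $(\kappa,\kappa^+)$-limit model as well, which is the desired conclusion.

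The one point worth stating carefully---and the only place where anything subtle happens---is the direction of the argument. Proposition~\ref{mu-plus-limit} converts $(\kappa,\kappa^+)$-limits into $(\kappa^+,\kappa^+)$-limits, which is the opposite of what the corollary asserts. The leverage comes from combining it with the uniqueness of limit models of size $\kappa^+$: it suffices to exhibit \emph{one} $(\kappa,\kappa^+)$-limit model that is simultaneously a $(\kappa^+,\kappa^+)$-limit model, after which uniqueness transports that dual representation to every limit model of cardinality $\kappa^+$. Beyond this bookkeeping and confirming the cardinal arithmetic, I do not anticipate any genuine obstacle.
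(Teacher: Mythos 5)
Your proposal is correct and is essentially the paper's own argument: the paper offers no written proof beyond the remark that the corollary follows by ``combining Theorem~\ref{uniqueness thm} with Proposition~\ref{mu-plus-limit},'' and your route---build one $(\kappa,\kappa^+)$-limit witness via $\kappa$-stability, upgrade it to a $(\kappa^+,\kappa^+)$-limit by Proposition~\ref{mu-plus-limit}, then transport this dual representation to every $(\kappa^+,\theta)$-limit model using the uniqueness of limit models of cardinality $\kappa^+$ from Theorem~\ref{uniqueness thm}---is precisely that combination, with the hypotheses ($\kappa^+$-stability from Definition~\ref{ss assm}, no maximal models of cardinality $\kappa^+$) correctly accounted for.
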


\section{Downward Symmetry Transfer}\label{downward section}

In this section we provide the proof of Theorem \ref{symmetry transfer}.  While the result follows from Theorem 4 and 5 of \cite{Va3-SS}, we include the proof here for completeness since  \cite{Va3-SS} is currently under review and has not yet been published.   Additionally, the proof of Theorem \ref{symmetry transfer} serves as the blueprint for the successor step
for a more general result of transferring symmetry downward that appears in the unpublished work \cite{VV}.

In the proof of Theorem \ref{symmetry transfer}, we will be using towers composed of models of cardinality $\mu$ and other towers composed of models of cardinality $\mu^+$.  These towers will  be based on the same sequence of elements $\langle a_\beta\mid \beta<\delta\rangle$.  To distinguish the towers of 
models of size $\mu^+$ from those of size $\mu$, we will use different notation.  The models of cardinality $\mu^+$ will be decorated with an asterisk ($*$), accent ($\grave{}$), or a $\mu^+$ in the superscript.  All other models in this proof will have cardinality  $\mu$.

\begin{proof}[Proof of Theorem \ref{symmetry transfer}]
Suppose $\K$ does not have symmetry for $\mu$-non-splitting.  By Fact \ref{symmetry theorem} and the $\mu$-superstability assumption, $\K$ has a reduced discontinuous tower.  Let $\alpha$ be the minimal ordinal such that $\K$ has a reduced discontinuous tower of length $\alpha$.  By Fact \ref{monotonicity}, we may assume that $\alpha=\delta+1$ for some limit ordinal $\delta$.  Fix $\T=(\bar M,\bar a,\bar N)\in\K^*_{\mu,\alpha}$ a reduced discontinuous tower  with $b\in M_\delta\backslash \Union_{\beta<\delta}M_\beta$.  By Fact \ref{density of reduced} and minimality of $\alpha$, we can build an increasing and continuous chain of reduced, continuous towers $\langle\T^i\mid i<\mu^+\rangle$ extending $\T\restriction\delta$.  

For each $\beta<\delta$, set $M^{\mu^+}_\beta:=\Union_{i<\mu^+}M^i_\beta$.  
Notice that for each $\beta<\delta$ 
\begin{equation}\label{non-split}
\tp(a_\beta/M^{\mu^+}_\beta)\text{ does not }\mu\text{-split over }N_\beta.
\end{equation}
 If $\tp(a_\beta/M^{\mu^+}_\beta)$ did $\mu$-split over $N_\beta$, it would be witnessed by models inside some $M^i_\beta$, contradicting the fact that $\tp(a_\beta/M^{i}_\beta)$ does not $\mu$-split over $N_\beta$.

We will construct a tower in $\K^*_{\mu^+,\delta}$ from $\bar M^{\mu^+}$.    Notice that by construction, each $M^{\mu^+}_\beta$ is a $(\mu,\mu^+)$-limit model.  Therefore by Proposition \ref{mu-plus-limit}, each $M^{\mu^+}_\beta$ is a $(\mu^+,\mu^+)$-limit model.  Fix $\langle \grave M^i_\beta\mid i<\mu^+\rangle$ witnessing that $M^{\mu^+}_\beta$ is a $(\mu^+,\mu^+)$-limit model.  Without loss of generality we can assume that $N_\beta\prec_{\K}\grave M^0_\beta$.  
By $\mu^+$-superstability we know that for each $\beta<\delta$ there is $i(\beta)<\mu^+$ so that $\tp(a_\beta/M^{\mu^+}_\beta)$ does not $\mu^+$-split over $\grave M^{i(\beta)}_\beta$.  Set $N^{\mu^+}_\beta:=\grave M^{i(\beta)}_\beta$.  Notice that $(\bar M^{\mu^+},\bar a,\bar N^{\mu^+})$ is a tower in $\K^*_{\mu^+,\delta}$.  Extend $(\bar M^{\mu^+},\bar a,\bar N^{\mu^+})$ to a tower $\T^{\mu^+}\in\K^*_{\mu^+,\alpha}$ by appending to $\bar M^{\mu^+}$ a $\mu^+$-limit model universal over $M_\delta$ which contains $\Union_{\beta<\delta}M^{\mu^+}_\beta$.  
Since $\T^{\mu^+}$ is discontinuous, by Fact \ref{symmetry theorem} and our $\mu^+$-symmetry assumption, we know that it is not reduced.

However, by our $\mu^+$-symmetry assumption, Fact \ref{symmetry theorem} and Fact \ref{density of reduced} imply that there exists a reduced, continuous tower $\T^*\in\K^*_{\mu^+,\alpha}$ extending $\T^{\mu^+}$.  By multiple applications of Fact \ref{density of reduced}, we may assume that in $\T^*$ each $M^*_\beta$ is a $(\mu^+,\mu^+)$-limit over $M^{\mu^+}_\beta$.  See Fig. \ref{fig:tower}.
\begin{figure}[h]
\begin{tikzpicture}[rounded corners=5mm,scale =2.9,inner sep=.5mm]
\draw (0,1.5) rectangle (.75,.5);
\draw (0,1.5) rectangle (1.75,1);
\draw (.25,.65) node {$N_0$};
\draw (1.25,1.1) node {$N_\beta$};
\draw[rounded corners=5mm]  (0, 1.5) --(0,.75)-- (1,-1) -- (1.5,-1)  -- (1.75,1)--(1.75,1.5)--  cycle;
\draw (1.85,.7) node {$N^{\mu^+}_\beta$};
\draw (0,0) rectangle (4,1.5);
\draw (.85,.25) node {$M_0$};
\draw(1.4,.25) node {$M_1$};
\draw (1.8,.25) node {$\dots M_\beta$};
\draw (2.35,.25) node {$M_{\beta+1}$};
\draw (3.15,.2) node {$\dots\displaystyle{\Union_{k<\delta}M_k}$};
\draw (3.85, .25) node {$M_\delta$};
\draw (-.5,.25) node {$(\bar M,\bar a,\bar N)$};
\draw (0,1.5) rectangle (3.5, -.4);
\draw[rounded corners=5mm]  (0, 1.5) -- (0,-2) -- (3.6,-2)  -- (4,0)--(4,1.5) --  cycle;
\draw[rounded corners=5mm]  (0, 1.5) -- (0,-1.35) -- (3.5,-1.35)  -- (4,0)--(4,1.5) --  cycle;
\draw (.85,-.15) node {$M^{i}_0$};
\draw (1.8,-.15) node {$\dots M^{i}_\beta$};
\draw (2.35,-.15) node {$M^{i}_{\beta+1}$};
\draw(1.4,-.15) node {$M^{i}_1$};
\draw (3.15,-.2) node {$\dots\displaystyle{\Union_{l<\delta}M^{i}_l}$};
\draw (-.5,-.15) node {$\T^i$};
\draw (.85,-.6) node {$\vdots$};
\draw (1.75,-.6) node {$\vdots$};
\draw (2.35,-.6) node {$\vdots$};
\draw (3.2,-.6) node {$\vdots$};
\draw (1.35,-.6) node {$\vdots$};
\draw (0,1.5) rectangle (3.5, -1.35);
\draw (0,1.5) rectangle (1,-2);
\draw(0,1.5) rectangle (1.5, -2);
\draw (0,1.5) rectangle (2.5, -2);
\draw (0,1.5) rectangle (2,-2);
\draw (.8,-1.15) node {$M^{\mu^+}_0$};
\draw (1.8,-1.15) node {$ M^{\mu^+}_\beta$};
\draw (2.3,-1.15) node {$M^{\mu^+}_{\beta+1}$};
\draw(1.35,-1.15) node {$M^{\mu^+}_1$};
\draw (3.1,-1.2) node {$\dots\displaystyle{\Union_{l<\delta}M^{\mu^+}_l}$};
\draw (-.5,-1.15) node {$\T^{\mu^+}$};
\draw (-.5,-1.75) node {$\T^{*}$};
\draw (.8,-1.75) node {$M^{*}_0$};
\draw(1.4,-1.75) node {$M^*_1$};
\draw (1.8,-1.75) node {$ M^{*}_\beta$};
\draw (2.3,-1.75) node {$M^{*}_{\beta+1}$};
\node at (3.75,.75)[circle, fill, draw, label=90:$b$] {};
\node at (2.25,.65)[circle, fill, draw, label=290:$a_\beta$] {};
\node at (1.1,.65)[circle, fill, draw, label=290:$a_1$] {};
\draw (3.65, -.6) node {$M^{\mu^+}_\delta$};
\draw (3.1,-1.8) node {$\dots\displaystyle{\Union_{\beta<\delta}M^*_\beta}=M^*_\delta$};
\end{tikzpicture}
\caption{The  towers in the proof of Theorem \ref{symmetry transfer}} \label{fig:tower}
\end{figure}
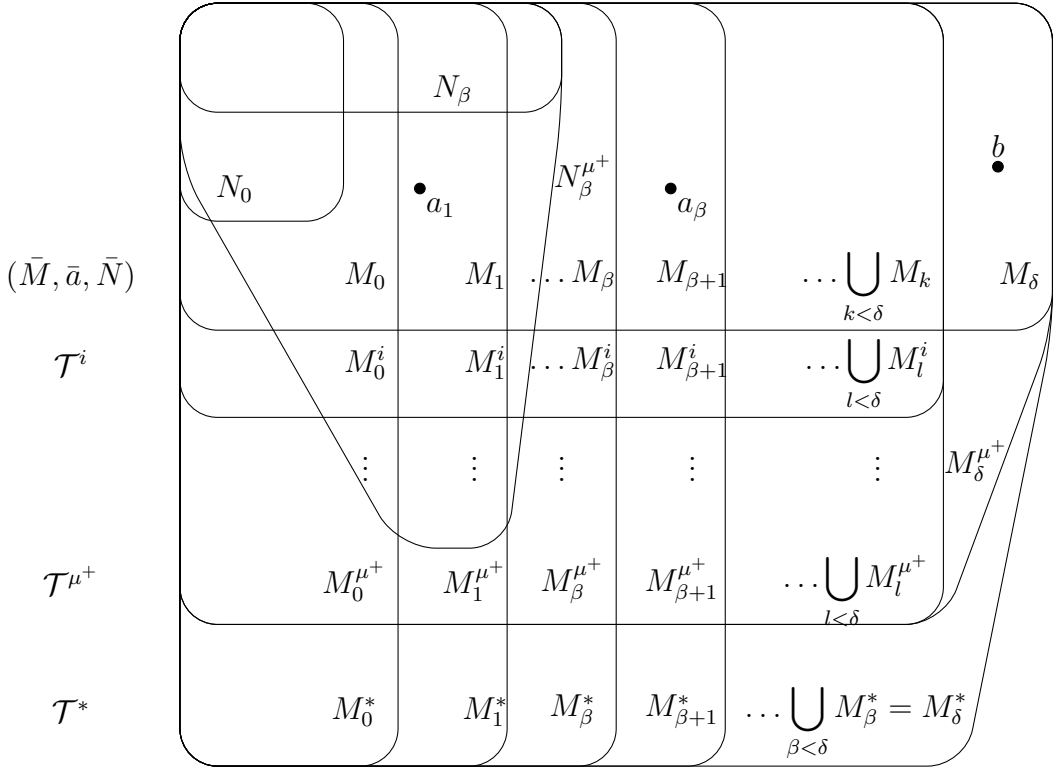

\begin{claim}\label{star non-split}
For every $\beta<\alpha$, 
$\tp(a_\beta/M^*_\beta)$ does not $\mu$-split over $N_\beta$.
\end{claim}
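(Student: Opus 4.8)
The plan is to transfer the non-$\mu$-splitting information we already have over the model $M^{\mu^+}_\beta$ (equation \eqref{non-split}) up to the genuinely larger model $M^*_\beta$, using the non-$\mu^+$-splitting built into the tower $\T^*$ to control the part of $M^*_\beta$ lying outside $M^{\mu^+}_\beta$. Fix $\beta$ (necessarily $\beta<\delta$, so that $a_\beta$ and $N_\beta$ are defined). I would first record the relevant structure: since $\T^*\in\K^*_{\mu^+,\alpha}$ carries the designated submodels $\bar N^{\mu^+}$, its tower condition gives that $\tp(a_\beta/M^*_\beta)$ does not $\mu^+$-split over $N^{\mu^+}_\beta$; moreover $M^{\mu^+}_\beta$ is a $(\mu^+,\mu^+)$-limit over $N^{\mu^+}_\beta=\grave M^{i(\beta)}_\beta$ (hence universal over it), and $N_\beta\preceq_{\K}\grave M^0_\beta\preceq_{\K}N^{\mu^+}_\beta\preceq_{\K}M^{\mu^+}_\beta\preceq_{\K}M^*_\beta$.

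First I would reduce to small submodels. Since $\mu\geq\LS(\K)$, every witness to $\mu$-splitting lies in a $\K$-submodel of cardinality $\mu$, so it suffices to show that for every $M_0\in\K_\mu$ with $N_\beta\preceq_{\K}M_0\preceq_{\K}M^*_\beta$ the type $\tp(a_\beta/M_0)$ does not $\mu$-split over $N_\beta$. When $M_0\preceq_{\K}M^{\mu^+}_\beta$ this is immediate from \eqref{non-split} by monotonicity of non-splitting in the domain, so the real content is the case $M_0\not\preceq_{\K}M^{\mu^+}_\beta$.

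For that case the idea is to pull $M_0$ down into $M^{\mu^+}_\beta$. I would choose $P\preceq_{\K}M^*_\beta$ of cardinality $\mu^+$ with $N^{\mu^+}_\beta\preceq_{\K}P$ and $M_0\preceq_{\K}P$; by universality of $M^{\mu^+}_\beta$ over $N^{\mu^+}_\beta$ there is a $\K$-embedding $g\colon P\to M^{\mu^+}_\beta$ fixing $N^{\mu^+}_\beta$ pointwise, and in particular fixing $N_\beta$. Then $P$ and $g(P)$ are both $\mu^+$-sized $\K$-submodels of $M^*_\beta$ extending $N^{\mu^+}_\beta$ with $g\colon P\cong_{N^{\mu^+}_\beta}g(P)$, so non-$\mu^+$-splitting of $\tp(a_\beta/M^*_\beta)$ over $N^{\mu^+}_\beta$ yields $\tp(a_\beta/g(P))=g(\tp(a_\beta/P))$; restricting to $M_0$ gives $\tp(a_\beta/g(M_0))=g(\tp(a_\beta/M_0))$. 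Since $g(M_0)\preceq_{\K}M^{\mu^+}_\beta$, equation \eqref{non-split} and monotonicity give that $\tp(a_\beta/g(M_0))$ does not $\mu$-split over $N_\beta$; because $g$ fixes $N_\beta$ and $\mu$-splitting is invariant under isomorphisms fixing the base, $\tp(a_\beta/M_0)$ does not $\mu$-split over $N_\beta$ either, closing the case and hence the claim.

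The step I expect to be the main obstacle is precisely this last case: the two facts at our disposal live at different scales, namely non-$\mu$-splitting over the $\mu$-sized $N_\beta$ versus non-$\mu^+$-splitting over the $\mu^+$-sized $N^{\mu^+}_\beta$, and the proof must glue them together. The delicate points are (i) arranging the pullback $g$ to fix $N_\beta$, which is exactly where the normalization $N_\beta\preceq_{\K}\grave M^0_\beta$ and the universality of $M^{\mu^+}_\beta$ over $N^{\mu^+}_\beta$ are needed, and (ii) keeping straight that $g$ commutes with restriction of types and preserves $\mu$-splitting over the fixed base $N_\beta$. Provided these bookkeeping properties of splitting and limit models are in hand, no further machinery is required; in particular I would avoid routing the argument through uniqueness of non-splitting extensions, since matching $\tp(a_\beta/M^*_\beta)$ with the non-$\mu$-splitting extension of $\tp(a_\beta/M^{\mu^+}_\beta)$ risks circularity.
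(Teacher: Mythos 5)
Your proof is correct, and it takes a genuinely different route from the paper's, though both pivot on the same two ingredients: the tower condition for $\T^*$ (that $\tp(a_\beta/M^*_\beta)$ does not $\mu^+$-split over $N^{\mu^+}_\beta$) and transporting (\ref{non-split}) by an $N_\beta$-fixing map. The paper argues globally: having normalized $\T^*$ (via repeated applications of the density of reduced towers) so that each $M^*_\beta$ is a $(\mu^+,\mu^+)$-limit over $M^{\mu^+}_\beta$, it notes that $M^*_\beta$ and $M^{\mu^+}_\beta$ are both $(\mu^+,\mu^+)$-limits over $N^{\mu^+}_\beta$, takes a single isomorphism $f:M^*_\beta\cong_{N^{\mu^+}_\beta}M^{\mu^+}_\beta$, applies non-$\mu^+$-splitting to the pair $(M^*_\beta, f(M^*_\beta))$ to get $\tp(f(a_\beta)/M^{\mu^+}_\beta)=\tp(a_\beta/M^{\mu^+}_\beta)$, picks $g\in\Aut_{M^{\mu^+}_\beta}(\C)$ with $g(f(a_\beta))=a_\beta$, and pulls (\ref{non-split}) back along $(g\circ f)^{-1}$. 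You instead localize: reduce to $\mu$-sized witnesses $M_0$, hull $M_0\cup N^{\mu^+}_\beta$ into a $\mu^+$-sized $P$, and embed $P$ into $M^{\mu^+}_\beta$ over $N^{\mu^+}_\beta$ using only the universality of $M^{\mu^+}_\beta$ over $N^{\mu^+}_\beta$ (which is built into the choice $N^{\mu^+}_\beta=\grave M^{i(\beta)}_\beta$), then apply non-$\mu^+$-splitting to $g\colon P\cong_{N^{\mu^+}_\beta}g(P)$ and transport by invariance. What this buys you: you need neither the back-and-forth uniqueness of $(\mu^+,\mu^+)$-limit models over a common base nor the normalization that $M^*_\beta$ be a $(\mu^+,\mu^+)$-limit over $M^{\mu^+}_\beta$, so your argument works for an arbitrary tower extension $\T^*\geq\T^{\mu^+}$; the price is the extra bookkeeping of the reduction to small witnesses and the restriction/invariance steps, which you handle correctly (including the point that $g$ fixes $N_\beta$ because $N_\beta\preceq_{\K}\grave M^0_\beta\preceq_{\K}N^{\mu^+}_\beta$). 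The paper's version, by contrast, is shorter once the normalization is in place and disposes of the whole claim with one automorphism. Your closing worry about circularity is moot but harmless: the paper's proof does not route through uniqueness of non-splitting extensions either.
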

\begin{proof}
Since $M^*_\beta$ and $M^{\mu^+}_\beta$ are both $(\mu^+,\mu^+)$-limit models over $N^{\mu^+}_\beta$, there exists $f:M^*_\beta\cong_{N^{\mu^+}_\beta}M^{\mu^+}_\beta$.  Since $\T^*$ is a tower extending $\T^{\mu^+}$, we know that $\tp(a_\beta/M^*_\beta)$ does not $\mu^+$-split over $N^{\mu^+}_\beta$.  Therefore by the definition of non-splitting, it must be the case that $\tp(f(a_\beta)/M^{\mu^+}_\beta)=\tp(a_\beta/M^{\mu^+}_\beta)$.  From this equality of types we can fix $g\in\Aut_{M^{\mu^+}_\beta}(\C)$ with $g(f(a_\beta))=a_\beta$.
An application of $(g\circ f)^{-1}$ to $(\ref{non-split})$ yields the statement of the claim.

\end{proof}

Since $\T^*$ is continuous and extends $\T^{\mu^+}$ which contains $b$, there is $\beta<\delta$ such that $b\in M^*_\beta$.  Fix such a $\beta$.

We now will define a tower $\T^b\in\K^*_{\mu,\alpha}$ extending $\T$.  For $\gamma<\beta$, take $M^b_\gamma:=M_\gamma$.  For $\gamma=\beta$, let $M^b_\gamma$ be a $(\mu,\mu)$-limit model over $M_\gamma$ inside $M^*_\gamma$ so that $b\in M^b_\gamma$.  For $\gamma>\beta$, take $M^b_\gamma$ to be a $(\mu,\mu)$-limit model over $M_\gamma$ so that $\Union_{\xi<\gamma}M^b_{\xi}\prec_{\K}M^b_\gamma$.  Notice that by Claim \ref{star non-split} and monotonicity of non-splitting, the tower $\T^b$ defined as $(\bar M^b,\bar a,\bar N)$ is a tower extending $\T$ with $b\in (M^b_\beta\backslash M_\beta)\bigcap M_\alpha$.  This contradicts our assumption that $\T$ was reduced.

\end{proof}

The following is a strengthening of Corollary 1 from \cite{Va3-SS}.  In particular, here we replace the assumption that $\K$ is categorical in $\mu^+$ with the statement: $\K$ is categorical in $\mu^{+n}$ for some $n<\omega$.
\begin{corollary}\label{categoricity corollary}
Suppose that $\K$ satisfies the amalgamation and joint embedding properties and has arbitrarily large models.  Fix $\mu$ a cardinal $\geq\LS(\K)$.
If $\K$ is categorical in $\lambda=\mu^{+n}$, then $\K$ has symmetry for non-$\mu$-splitting.

\end{corollary}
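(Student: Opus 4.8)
The plan is to prove a uniform statement by induction on $n$ and read off the corollary as the instance $\kappa=\mu$. Precisely, I would establish: \emph{for every $n\geq 1$ and every cardinal $\kappa\geq\LS(\K)$, if $\K$ is categorical in $\kappa^{+n}$, then $\K$ has symmetry for non-$\kappa$-splitting.} Setting $\kappa=\mu$ recovers the corollary (the content is genuinely new for $n\geq 2$, since $n=1$ is the old Corollary~1 of \cite{Va3-SS}). The base case $n=1$ asserts that categoricity in $\kappa^+$ yields symmetry for non-$\kappa$-splitting, which is exactly Corollary~1 of \cite{Va3-SS} applied with its base cardinal taken to be $\kappa$; this is legitimate because $\kappa\geq\LS(\K)$ and because the global hypotheses of amalgamation, joint embedding, and arbitrarily large models are in force throughout and carry over verbatim to each cardinal level.

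For the inductive step, suppose $n\geq 2$ and $\K$ is categorical in $\kappa^{+n}$. First I would secure the superstability needed to drive the descent: since $\kappa$ and $\kappa^+$ are both strictly below the categoricity cardinal $\kappa^{+n}$ (this is where $n\geq 2$ is used), a categoricity-implies-superstability transfer gives that $\K$ is both $\kappa$-superstable and $\kappa^+$-superstable. Next I would apply the induction hypothesis with base cardinal $\kappa^+$ and exponent $n-1$: because $(\kappa^+)^{+(n-1)}=\kappa^{+n}$, categoricity in $\kappa^{+n}$ yields symmetry for non-$\kappa^+$-splitting. Finally I would invoke Theorem~\ref{symmetry transfer} with its parameter $\mu$ instantiated as $\kappa$: the $\kappa$- and $\kappa^+$-superstability from the first step, together with symmetry for non-$\kappa^+$-splitting from the second step, yield symmetry for non-$\kappa$-splitting, completing the step.

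The step I expect to be the main obstacle is the first one, namely extracting $\kappa$- and $\kappa^+$-superstability from categoricity in $\kappa^{+n}$. The descent provided by Theorem~\ref{symmetry transfer} is powered entirely by superstability at the two consecutive cardinals involved, so the whole argument hinges on having a clean statement that categoricity yields superstability at the \emph{intermediate} cardinals $\kappa,\kappa^+<\kappa^{+n}$, not merely at the categoricity cardinal itself. I would isolate this as a preliminary lemma, citing the transfer results already used to prove Corollary~1 of \cite{Va3-SS}, and carefully verify that its hypotheses — amalgamation, joint embedding, arbitrarily large models, and $\LS(\K)\leq\kappa$ — all hold at the relevant levels. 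Granting that lemma, the rest is bookkeeping: matching the index shift $(\kappa^+)^{+(n-1)}=\kappa^{+n}$ and checking that every cited result is applied at a cardinal $\geq\LS(\K)$.
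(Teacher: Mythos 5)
Your proposal is correct and is essentially the paper's own proof: the paper likewise obtains symmetry for non-$\mu^{+(n-1)}$-splitting from Corollary~1 of \cite{Va3-SS} and then iterates Theorem~\ref{symmetry transfer} downward through $\mu^{+(n-2)},\dots,\mu$, your induction on $n$ being just a repackaging of that iteration. The preliminary lemma you rightly flag as the main obstacle---superstability at the intermediate cardinals $\mu^{+k}$ for $0\leq k<n$, not merely at the categoricity cardinal---is exactly what the paper supplies, deriving stability at these cardinals from EM-models (\cite[Theorem 8.2.1]{Ba}) and superstability via the argument of Theorem~2.2.1 of \cite{ShVi}, with the GCH hypothesis there eliminated by amalgamation \cite[Theorem 6.3]{GVas}.
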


\begin{proof}
Notice that categoricity in $\lambda$ and the existence of arbitrarily large models allows us to make use of EM-models.  These assumptions imply stability in $\kappa$ for $\kappa=\mu^{+k}$ with $0\leq k<n$ (see for instance Theorem 8.2.1 of \cite{Ba}).
Also,  $\kappa$-superstability for $\kappa=\mu^{+k}$ for $0\leq k<n$ follows from categoricity by the argument of Theorem 2.2.1 of \cite{ShVi}.  While \cite{ShVi} uses the assumption of GCH, it can be eliminated here because we are assuming the amalgamation property \cite[Theorem 6.3]{GVas}.
By Corollary 1 of \cite{Va3-SS}, we get symmetry for non-$\mu^{+(n-1)}$-splitting.  Then, Theorem \ref{symmetry transfer} gives us symmetry for non-$\mu^{k}$-splitting for the remaining $0\leq k<n-1$.

\end{proof}

Using Corollary \ref{categoricity corollary}, we add to the line of work on the uniqueness of limit models 
by deriving a relative of the main result, Theorem 1.9, of \cite{GVV} and Theorem 1 of \cite{Va3-SS}.

\begin{corollary}
Suppose that $\K$ satisfies the amalgamation and joint embedding properties and has arbitrarily large models.  Fix $\mu$ a cardinal $\geq\LS(\K)$.  If $\K$ is categorical in $\mu^{+n}$, then 
for each $0<k<n$, and limit ordinals $\theta_1,\theta_2<\mu^{+(k+1)}$, if $M_1$ and $M_2$ are $(\mu^{+k},\theta_1)$- and $(\mu^{+k},\theta_2)$-limit models over $N$, respectively, then $M_1$ is isomorphic to $M_2$ over $M$.

\end{corollary}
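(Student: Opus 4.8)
The plan is to apply Theorem \ref{uniqueness thm} one cardinal at a time, with the cardinal $\mu^{+k}$ playing the role of the base cardinal ``$\mu$'' in that theorem. Fix $k$ with $0<k<n$. To invoke Theorem \ref{uniqueness thm} at $\mu^{+k}$ I must verify its four hypotheses: that $\mu^{+k}\geq\LS(\K)$, that the limit ordinals $\theta_1,\theta_2$ are $<(\mu^{+k})^+=\mu^{+(k+1)}$, that $\K$ is $\mu^{+k}$-superstable, and that $\K$ satisfies $\mu^{+k}$-symmetry. The first is immediate from $\mu\geq\LS(\K)$ and $k>0$, and the second is exactly the bound assumed in the statement of the corollary.

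The remaining two hypotheses are precisely the intermediate facts already assembled inside the proof of Corollary \ref{categoricity corollary}. There we observed that categoricity in $\mu^{+n}$, together with amalgamation and the existence of arbitrarily large models, yields $\kappa$-superstability for every $\kappa=\mu^{+k}$ with $0\leq k<n$ (via EM-models and the argument of Theorem 2.2.1 of \cite{ShVi}, with the GCH hypothesis removed using \cite[Theorem 6.3]{GVas}). This supplies the $\mu^{+k}$-superstability needed here. For symmetry, that same proof first obtains symmetry for non-$\mu^{+(n-1)}$-splitting from Corollary 1 of \cite{Va3-SS}, and then applies Theorem \ref{symmetry transfer} repeatedly to push symmetry down to every $\mu^{+k}$ with $0\leq k<n-1$. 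Hence $\mu^{+k}$-symmetry holds for every $0<k<n$.

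With all four hypotheses in hand, Theorem \ref{uniqueness thm} applied at the cardinal $\mu^{+k}$ tells us that the $(\mu^{+k},\theta_1)$- and $(\mu^{+k},\theta_2)$-limit models $M_1$ and $M_2$ over $N$ are isomorphic over $N$, which is the desired conclusion.

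Since the argument is essentially bookkeeping of which superstability and symmetry levels are available at each intermediate cardinal, I do not expect a genuine obstacle. The one point requiring care is confirming that symmetry has indeed been transferred all the way down to the relevant $\mu^{+k}$, rather than stopping at some larger cardinal; this is exactly why the downward transfer of Theorem \ref{symmetry transfer}, iterated as in the proof of Corollary \ref{categoricity corollary}, is indispensable. A second routine check is that the successor bound $(\mu^{+k})^+$ coincides with the ordinal bound $\mu^{+(k+1)}$ imposed on $\theta_1$ and $\theta_2$, so that Theorem \ref{uniqueness thm} applies without loss.
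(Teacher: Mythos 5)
Your proposal is correct and takes essentially the same route as the paper: the paper also derives the needed symmetry levels from Corollary \ref{categoricity corollary} and then combines superstability with Fact \ref{symmetry theorem} and the arguments of \cite{GVV}, which is precisely the content already packaged in Theorem \ref{uniqueness thm} that you invoke at each cardinal $\mu^{+k}$. The only discrepancy is the statement's final phrase ``isomorphic to $M_2$ over $M$,'' which is a typo for ``over $N$''---the conclusion your argument (and the paper's) actually establishes.
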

\begin{proof}
This follows from Corollary \ref{categoricity corollary}, Fact \ref{symmetry theorem}, and the arguments of \cite{GVV} which show that superstability plus the statement that reduced towers are continuous is enough to get uniqueness of limit models in a given cardinality.

\end{proof}


 \section{Union of Saturated Models: warm-up}\label{sec:warm-up}
 
 The goal of this section is to prove  the following warm-up to Theorem \ref{main theorem}.

   \begin{theorem}\label{limit thm}
 Let $\K$ be an abstract elementary class which satisfies the joint embedding and amalgamation properties.  Suppose that $\lambda$ and $\mu$ are cardinals $\geq\LS(\K)$ with  $\lambda\geq\mu^{++}$ and that $\theta$ is a limit ordinal $<\lambda^+$.  If $\K$ is $\mu^+$-superstable  and satisfies $\mu^+$-symmetry, then for any increasing  sequence $\langle M_i\mid i<\theta\rangle$  of $\mu^{++}$-saturated models of cardinality $\lambda$, $M=\Union_{i<\theta}M_i$ is $\mu^+$-saturated.
 \end{theorem}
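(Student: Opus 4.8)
The plan is to show that $M=\Union_{i<\theta}M_i$ realizes every Galois type over each of its $\mu$-sized submodels. Fix $N\prec_\K M$ with $\|N\|=\mu$ and $p\in\gaS(N)$. If $\cf(\theta)\geq\mu^+$, then the $\mu$ elements of $N$ meet only boundedly many of the $M_i$, so $N\prec_\K M_i$ for some $i<\theta$; as $M_i$ is $\mu^{++}$-saturated it is $\mu^+$-saturated and realizes $p$. Passing to a cofinal subchain, I may therefore assume $\theta=\cf(\theta)$ is regular with $\theta\leq\mu$.

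In this case the goal becomes to build, as a $\prec_\K$-submodel of $M$, a $(\mu^+,\mu^+)$-limit model $N^*$ with $N\prec_\K N^*$. Once this is done, Theorem \ref{uniqueness thm}---applied at the cardinal $\mu^+$, which is legitimate since $\mu^+\geq\LS(\K)$, $\K$ is $\mu^+$-superstable and $\mu^+$-symmetric, and there are no maximal models in $\mu^+$---guarantees that every limit model of cardinality $\mu^+$ is saturated. Hence $N^*$ is $\mu^+$-saturated, and as $\|N\|=\mu<\mu^+$ it realizes $p$, completing the argument. The entire problem thus reduces to locating a limit model of cardinality $\mu^+$ over $N$ inside $M$.

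I would produce $N^*$ as the union of a $\prec_\K$-increasing continuous chain $\langle B_\beta\mid\beta<\mu^+\rangle$ of cardinality-$\mu^+$ submodels of $M$, with $N\prec_\K B_0$ and each $B_{\beta+1}$ universal over $B_\beta$. The successor step asks for a universal extension of $B_\beta$ living inside $M$, and here the hypothesis that each $M_i$ is $\mu^{++}$-saturated is decisive: a $\mu^{++}$-saturated model realizes every type over its submodels of size $\leq\mu^+$ and so contains universal extensions of any $\mu^+$-sized submodel it contains. The obstacle is that, because $\theta\leq\mu<\mu^+$, a model of size $\mu^+$ need not be contained in any single $M_i$---this already affects $B_0$ and becomes acute at limit stages $\beta$ with $\cf(\beta)\geq\theta$, where $B_\beta=\Union_{\gamma<\beta}B_\gamma$ can spread across the whole chain.

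This anchoring difficulty is where $\mu^+$-superstability and $\mu^+$-symmetry enter. For a type $q\in\gaS(B_\beta)$ I would use $\mu^+$-superstability (Definition \ref{ss assm} at level $\mu^+$) to find a base $C\prec_\K B_\beta$ of cardinality $\mu^+$, appearing early enough in the resolution of $B_\beta$ to lie inside a single $M_i$, over which $q$ does not $\mu^+$-split; I then realize $q\restriction C$ inside that $M_i$ using its $\mu^{++}$-saturation and transfer the realization along the non-$\mu^+$-splitting extension to realize $q$ in $M$. Symmetry for non-$\mu^+$-splitting, via the continuity of reduced towers (Fact \ref{symmetry theorem}), is what makes this transfer coherent at the limit stages and keeps $\langle B_\beta\mid\beta<\mu^+\rangle$ a genuine tower whose union is a $(\mu^+,\mu^+)$-limit model. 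I expect the bookkeeping that keeps the non-splitting bases anchored inside single $M_i$ as the chain climbs through all $\mu^+$ stages to be the technical heart of the proof; it is exactly this anchoring that an increasing chain can manage under the strong $\mu^{++}$-saturation hypothesis, whereas the weaker assumptions of Theorem \ref{main theorem} force the directed-system construction.
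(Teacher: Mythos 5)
Your reduction to $\cf(\theta)\leq\mu$ and your use of Theorem \ref{uniqueness thm} to convert ``limit model of cardinality $\mu^+$'' into ``$\mu^+$-saturated'' are both correct, but the construction you propose has a genuine gap, and it is of the self-inflicted kind. You insist on building a chain $\langle B_\beta\mid\beta<\mu^+\rangle$ of length $\mu^+$, i.e.\ a $(\mu^+,\mu^+)$-limit over $N$ inside $M$. The successor step then requires some $B_{\beta+1}\prec_{\K}M$ universal over a model $B_\beta$ of cardinality $\mu^+$ which, as you note yourself, need not sit inside any single $M_i$. Since a universal extension of $B_\beta$ realizes every type in $\gaS(B_\beta)$, you are demanding that $M$ realize types over $\mu^+$-sized submodels --- a $\mu^{++}$-saturation-like property that is strictly stronger than the theorem's conclusion ($M$ is only claimed to be $\mu^+$-saturated) and is not available from the hypotheses, which give superstability and symmetry only at $\mu^+$, not $\mu^{++}$. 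Your proposed repair does not close this hole: realizing $q\restriction C$ in some $M_i$ (where $C\prec_{\K}B_\beta$ is a non-$\mu^+$-splitting base anchored in $M_i$) produces an element $b$ with no control over $\tp(b/B_\beta)$; to invoke uniqueness of non-splitting extensions you would need to know that $\tp(b/B_\beta)$ itself does not $\mu^+$-split over $C$ and agrees with $q$ on a model universal over $C$, and neither follows from the $\mu^{++}$-saturation of $M_i$ alone. The appeal to continuity of reduced towers (Fact \ref{symmetry theorem}) is a non sequitur at this point: no tower in the technical sense is in play, and that equivalence does not license the transfer. What you are bumping into is essentially the difficulty the paper faces in the harder Theorem \ref{saturated theorem}, where it genuinely cannot be avoided and is resolved with a directed system, automorphisms extending the direct-limit maps, and a delicate non-splitting argument along a resolution of a $\mu$-sized $N$ (not a $\mu^+$-sized $B_\beta$); importing that machinery could rescue your outline, but it is not what you wrote, and it is overkill here.

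The fix for Theorem \ref{limit thm} is much simpler, and it is the paper's actual route: do not build a chain of length $\mu^+$ at all. Since Theorem \ref{uniqueness thm} makes \emph{every} limit model of cardinality $\mu^+$ saturated, a $(\mu^+,\theta)$-limit suffices, and at length $\theta<\mu^+$ the anchoring problem evaporates. Assume without loss of generality that $\langle M_i\mid i<\theta\rangle$ is continuous (by taking a minimal-length counterexample), enlarge $N$ so that it admits a resolution $\langle N_i\in\K_\mu\mid i<\theta\rangle$ with $N_i\prec_{\K}M_i$, and build an increasing continuous chain $M^*_i\in\K_{\mu^+}$ with $N_i\prec_{\K}M^*_i\prec_{\K}M_i$ and $M^*_{i+1}$ universal over $M^*_i$. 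The successor step is now immediate: $M^*_i\Union N_{i+1}$ lies in a $\mu^+$-sized submodel $M^{**}$ of $M_{i+1}$, and the $\mu^{++}$-saturated model $M_{i+1}$ is universal over $M^{**}$, so it contains a $\mu^+$-sized model universal over $M^{**}$; limit steps are unions, using the continuity of $\bar M$ and $\bar N$. Then $M^*=\Union_{i<\theta}M^*_i$ is a $(\mu^+,\theta)$-limit model inside $M$ containing $N$, hence saturated by Theorem \ref{uniqueness thm}, so $p$ is realized in $M^*\prec_{\K}M$.
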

 
Notice that the statement of Theorem \ref{limit thm} differs from Theorem \ref{main theorem} in two ways.  The cardinality, $\lambda$, of the saturated models in the chain is greater than or equal to the level of saturation, $\mu^{++}$, of the models $M_i$. Also, the level of saturation that we get in the  union is only $\mu^+$.
 
 The proof of this theorem will prepare us for a similar construction used
 in the proof Theorem \ref{main theorem}  with the addition of a directed system.  Given $N\prec_{\K}\Union_{i<\theta}M_i$ of cardinality $\mu$,
 the structure of the proof is to construct  an increasing chain $\langle M^*_i\mid i<\theta\rangle$ of models of cardinality $\mu^+$ inside $\Union_{i<\theta}M_i$ 
 so that $M^*:=\Union_{i<\theta}M^*_i$ contains
$N$ and
 so that $M^*_{i+1}$ is universal over $M^*_i$.  Then by definition of limit models,  $M^*$ is a $(\mu^+,\theta)$-limit model. By Theorem \ref{uniqueness thm}, $M^*$ is saturated, and every type over $N$ is realized in $M^*$ and hence in $\Union_{i<\theta}M_i$.

 \begin{proof}
 First observe that we may assume that the sequence $\langle M_i\mid i<\theta\rangle$   is continuous.  Otherwise, we could consider  $\langle M_i\mid i<\theta\rangle$ a counter-example of the theorem of minimal length and proceed to prove the theorem by contradiction using the argument below.
 
Fix $N\in\K_\mu$ with $N\prec_{\K}M$  and $p\in \gaS(N)$.  We will show that $p$ is realized in $M$.
Notice that if $\cf(\theta)\geq\mu^+$, the result follows easily.    If $\cf(\theta)\geq\mu^+$, then $N\prec_{\K}M_\alpha$ for some $i<\theta$.  Because $M_i$ is $\mu^{++}$-saturated, $p$ is realized in $M_i$.
 
 So, let us consider the more interesting case that $\cf(\theta)<\mu^+$.   Our goal is to define a sequence of models $\langle M^*_i\mid i<\cf(\theta)\rangle$ inside $M$ so that 
  $M^*_{i+1}$ is universal over $M^*_i$ and so that $M^*:=\Union_{i<\cf(\theta)}M^*_i$ contains $N$.

Suppose for the sake of contradiction that $p$ is omitted in $M$.  Then we can, by increasing the universe of $N$ if necessary, use the Downward L\"{o}wenheim-Skolem axiom to find 
$\langle N_i\in\K_\mu\mid i<\theta\rangle $  an increasing and continuous resolution of $N$ so that $N_i\prec_{\K}M_i$ for each $i<\theta$.

 We define an increasing and continuous sequence $\langle M^*_{i}\mid i<\theta\rangle$  so that for $i<\theta$: 
 \begin{enumerate}
 \item $M^*_{i}\in\K_{\mu^+}$ is  a  limit model.
 \item $N_i\prec_{\K}M^*_{i}$.
 \item $M^*_i\prec_{\K}M_i$.
\item\label{univ over condition} $M^*_{i+1}$ is a universal over $M^*_{i}$.
 \end{enumerate}
  This construction is straightforward since each $M_i$ is $\mu^{++}$-saturated and hence universal over every submodel of cardinality $\mu^+$.
We are assuming $\mu^+$-stability, so limit models of cardinality $\mu^+$ exist.  Therefore  $M_0$ contains a $(\mu^+,\omega)$-limit model containing $N_0$.  Let this be $M^*_0$.  Suppose $M^*_i$ has been defined.  Let $M^{**}$ be a submodel of $M_{i+1}$ of cardinality $\mu^+$ containing  $N_{i+1}\Union M^*_i$.  Because $M^*_{i+1}$ is $\mu^{++}$ saturated, it is $\mu^+$-universal over $M^{**}$, and therefore it contains a model $M^*_{i+1}$ of cardinality $\mu^+$ universal over $M^{**}$.  At limit ordinals $i$, we can take unions since both the sequences $\bar M$ and $\bar N$ are continuous.

 Let $M^*:=\Union_{i<\theta}M^*_i$.    By condition \ref{univ over condition} of the construction,  $M^*$ is a $(\mu^+,\theta)$-limit model.  Since we assume  $\mu^+$-symmetry and $\mu^+$-superstability, we can apply Theorem \ref{uniqueness thm} to conclude that this $(\mu^+,\theta)$-limit model is  $\mu^+$-saturated.  Thus $p$ is realized in $M^*$, and consequently in $M$ as required.

  \end{proof}
  
 A similar proof to Theorem \ref{limit thm} for a result related to Corollary \ref{limit cor} are found in \cite[Theorem 10.22]{Ba}.

     \begin{corollary}\label{limit cor}
 Let $\K$ be an abstract elementary class which satisfies the joint embedding and amalgamation properties.  Suppose $\lambda>\LS(\K)$ is a limit cardinal and $\theta$ is a limit ordinal $<\lambda^+$. If $\K$ is $\mu^+$-superstable  and satisfies $\mu^+$-symmetry for unboundedly many $\mu<\lambda$, then for any increasing and continuous sequence $\langle M_i\mid i<\theta\rangle$  of $\lambda$-saturated models, $\Union_{i<\theta}M_i$ is $\lambda$-saturated.
 \end{corollary}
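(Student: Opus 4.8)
The plan is to derive the global $\lambda$-saturation of the union from the local conclusion of Theorem \ref{limit thm}, using that $\lambda$ is a limit cardinal to choose the parameter $\mu$ freely. Write $M=\Union_{i<\theta}M_i$; as each $M_i$ is $\lambda$-saturated I take them, as in Theorem \ref{limit thm}, to be of cardinality $\lambda$, so that $\|M\|=\lambda$. To show that $M$ is $\lambda$-saturated it suffices to fix an arbitrary $N\prec_{\K}M$ with $\|N\|<\lambda$ together with an arbitrary $p\in\gaS(N)$, and to realize $p$ in $M$.

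So I would set $\chi=\|N\|<\lambda$ and invoke the hypothesis that $\mu^+$-superstability and $\mu^+$-symmetry hold for unboundedly many $\mu<\lambda$ to select one such $\mu$ with $\chi\leq\mu<\lambda$. Because $\lambda$ is a limit cardinal, $\mu<\lambda$ forces $\mu^{++}<\lambda$; in particular $\lambda\geq\mu^{++}$. Moreover, since $\mu^{++}\leq\lambda$, each $\lambda$-saturated $M_i$ is in particular $\mu^{++}$-saturated. Thus for this choice of $\mu$ the entire hypothesis list of Theorem \ref{limit thm} is in force: an increasing sequence $\langle M_i\mid i<\theta\rangle$ of $\mu^{++}$-saturated models of cardinality $\lambda$, indexed by a limit ordinal $\theta<\lambda^+$, in a $\mu^+$-superstable class satisfying $\mu^+$-symmetry, with $\lambda\geq\mu^{++}$. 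Applying that theorem yields that $M$ is $\mu^+$-saturated, and since $\|N\|=\chi\leq\mu<\mu^+$ the type $p$ is realized in $M$. As $N$ and $p$ were arbitrary subject to $\|N\|<\lambda$, this shows $M$ is $\lambda$-saturated.

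The argument is a reduction rather than a fresh construction, so I do not anticipate a serious obstacle; the one delicate point is the simultaneous satisfaction of the two constraints on $\mu$. Realizing $p$ requires $\mu\geq\|N\|$, while invoking Theorem \ref{limit thm} requires $\mu^{++}\leq\lambda$, hence $\mu<\lambda$. These are compatible for every $N$ of size $<\lambda$ precisely because $\lambda$ is a limit cardinal: the admissible parameters $\mu$ are then cofinal in $\lambda$ and each satisfies $\mu^{++}<\lambda$, so one can always be chosen above $\|N\|$. This is exactly where the limit-cardinal assumption on $\lambda$ enters, and it is the mechanism that upgrades $\mu^+$-saturation for the individual $\mu$'s to full $\lambda$-saturation of the union.
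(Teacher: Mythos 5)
Your proof is correct and matches the paper's intended argument: the paper states Corollary \ref{limit cor} without a separate proof, presenting it as an immediate consequence of Theorem \ref{limit thm}, and your reduction---choosing, for each $N$ with $\|N\|<\lambda$ and each $p\in\gaS(N)$, a suitable $\mu\geq\|N\|$ from the unbounded set of cardinals where $\mu^+$-superstability and $\mu^+$-symmetry hold, and using that $\lambda$ is a limit cardinal to ensure $\mu^{++}\leq\lambda$---is exactly that consequence spelled out. Your observation that the limit-cardinal hypothesis is precisely what makes the admissible parameters $\mu$ cofinal in $\lambda$ is the key point, and you identify it correctly.
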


 \section{Union of Saturated Models}\label{sec:main theorem}
 
 In this section we prove Theorem \ref{main theorem}, by proving a slightly stronger statement.  
 Notice that Theorem \ref{uniqueness thm} and Theorem \ref{saturated theorem} together imply Theorem \ref{main theorem}.

  \begin{theorem}\label{saturated theorem}
 Let $\K$ be an abstract elementary class which satisfies the joint embedding and amalgamation properties.  Suppose $\mu\geq\LS(\K)$ is a  cardinal. If $\K$ is $\mu$- and $\mu^+$-superstable  and satisfies the property that  all limit models of cardinality $\mu^+$ are isomorphic, then for any increasing sequence $\langle M_i\in\K_{\geq\mu^{+}}\mid i<\theta<(\sup\|M_i\|)^+\rangle$  of $\mu^+$-saturated models, 
 $\Union_{i<\theta}M_i$ is $\mu^+$-saturated.
 \end{theorem}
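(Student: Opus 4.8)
The plan is to mimic the structure of the proof of Theorem \ref{limit thm}, reducing to the realization of a single Galois type and building an internal $(\mu^+,\theta)$-limit model, but to replace the increasing chain of the warm-up with a directed system so as to compensate for the weaker hypothesis. First I would reduce to the case where $\langle M_i\mid i<\theta\rangle$ is continuous, by passing to a counterexample of minimal length exactly as in Theorem \ref{limit thm}. Fix $N\prec_{\K}M:=\Union_{i<\theta}M_i$ with $N\in\K_\mu$ and a type $p\in\gaS(N)$; the goal is to realize $p$ inside $M$. If $\cf(\theta)\geq\mu^+$, then since $\|N\|=\mu$ the continuous resolution of $N$ stabilizes and $N\prec_{\K}M_i$ for some $i<\theta$; as $M_i$ is $\mu^+$-saturated, $p$ is realized in $M_i$ and we are done. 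So the interesting case is $\cf(\theta)\leq\mu$, and after reindexing I may assume $\theta=\cf(\theta)\leq\mu$.

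In this case I would fix an increasing continuous resolution $\langle N_i\in\K_\mu\mid i<\theta\rangle$ of $N$ with $N_i\prec_{\K}M_i$, arranged (using $\mu$-superstability) so that each $N_{i+1}$ is universal over $N_i$. Here lies the key difference from Theorem \ref{limit thm}: there the models $M_i$ were $\mu^{++}$-saturated, hence universal over their $\mu^+$-sized submodels, which made it possible to build a genuine increasing chain $M^*_i\prec_{\K}M_i$ of $\mu^+$-sized limit models with $M^*_{i+1}$ universal over $M^*_i$. With only $\mu^+$-saturation, $M_{i+1}$ is universal over its $\mu$-sized submodels but not over its $\mu^+$-sized ones, so an honest chain need not be a limit model. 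The remedy is to build instead a directed system $\langle M^*_i,\,f_{i,j}\mid i\leq j<\theta\rangle$ of $\mu^+$-sized limit models in which the bonding maps $f_{i,i+1}$ are $\K$-embeddings fixing $N_i$, rather than inclusions.

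To carry out the successor step I would use Proposition \ref{mu-plus-limit}: inside the $\mu^+$-saturated model $M_{i+1}$ one can build, over $N_{i+1}$, a $(\mu,\mu^+)$-limit model $R_{i+1}$ using only $\mu$-sized approximations (each step being handled by the $\mu^+$-saturation of $M_{i+1}$), and by Proposition \ref{mu-plus-limit} this $R_{i+1}$ is in fact a $(\mu^+,\mu^+)$-limit model. Since $R_{i+1}$ is then universal over $N_{i+1}$, I can amalgamate $M^*_i$ with $N_{i+1}$ over $N_i$ and embed the result into $R_{i+1}$, producing an embedding $f_{i,i+1}\colon M^*_i\to M^*_{i+1}:=R_{i+1}$ fixing $N_i$ with $M^*_{i+1}$ universal over $f_{i,i+1}(M^*_i)$. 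Taking the direct limit through Fact \ref{direct limit lemma} yields a model $M^*$ with $N=\Union_{i<\theta}N_i\prec_{\K}M^*$ and maps satisfying $f_{i,\theta}\restriction N_i=\id$; the universality built in at each successor makes $M^*$ a $(\mu^+,\theta)$-limit model. The hypothesis that all limit models of cardinality $\mu^+$ are isomorphic, together with Theorem \ref{uniqueness thm}, then shows $M^*$ is $\mu^+$-saturated, so $p$ is realized in $M^*$ by some element $c$.

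The hard part is the final transfer: $M^*$ is only an abstract direct limit, not a submodel of $M$, so realizing $p$ in $M^*$ does not immediately realize it in $M$, and there is no reason for $M$ to be $\mu^+$-universal over $N$ (that is essentially the conclusion we are after). Here I would exploit $\mu$-superstability: choose $j_0<\theta$ with $p$ non-$\mu$-splitting over $N_{j_0}$, pick $i>j_0$ with $c\in f_{i,\theta}(M^*_i)$, and set $c':=f_{i,\theta}^{-1}(c)\in M^*_i\prec_{\K}M_i\prec_{\K}M$. Because $f_{i,\theta}$ fixes $N_i$, the element $c'$ realizes $p\restriction N_i$ in $M$, and the remaining task is to upgrade this to $\tp(c'/N)=p$. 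I expect this to follow from the uniqueness of non-$\mu$-splitting extensions over the universal chain $\langle N_j\mid j<\theta\rangle$: both $p$ and $\tp(c'/N)$ extend $p\restriction N_{j_0}$, and the construction must be arranged so that $\tp(c'/N)$ also does not $\mu$-split over $N_{j_0}$, forcing the two types to coincide. Verifying this non-splitting of $\tp(c'/N;M)$ over $N_{j_0}$ is the main obstacle, and is where the precise bookkeeping of the directed system and the $\mu$-superstability of $\K$ must be combined.
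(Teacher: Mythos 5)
Your construction of the directed system, the saturated direct limit $M^*$, and the pullback of a realization into some $M^*_i\prec_{\K} M$ all track the paper's proof closely, but the step you yourself flag as ``the main obstacle'' is a genuine gap, and it cannot be closed in the form you propose. The problem is that you realize $p$ itself in $M^*$ and then pull back: the pullback $c'=f_{i,\theta}^{-1}(c)$ only satisfies $\tp(c'/N_i)=p\restriction N_i$, and for $j>i$ the type $\tp(c'/N_j)$ corresponds, under any automorphism extending $f_{i,\theta}$, to $\tp(c/f^*_{i,\theta}(N_j))$ --- a type over a model that in general is not contained in $N$, so the hypothesis that $c$ realizes $p$ gives no information about it. There is no way to arrange ``after the fact'' that $\tp(c'/N)$ does not $\mu$-split over your base $N_{j_0}$; the non-splitting information has to be built into the type you realize, not extracted from its realization.

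The paper's device is exactly this strengthening. Before realizing anything, extend each $f_{i,\theta}$ to $f^*_{i,\theta}\in\Aut(\C)$ with $f^*_{i,\theta}(N)\preceq_{\K} M^*$ (possible since $M^*$ is $\mu^+$-universal over $f_{i,\theta}(M^*_i)$), choose $N^*\prec_{\K}M^*$ of cardinality $\mu$ containing $N\cup\Union_{i<\theta}f^*_{i,\theta}(N)$, and use the extension property for non-$\mu$-splitting to find $p^*\in\gaS(N^*)$ extending $p$ with $p^*$ not $\mu$-splitting over a base $\check N$ fixed at the outset so that $N_0$ is a limit model over $\check N$. Realize $p^*$ --- not $p$ --- by $b^*\in M^*$ and pull back to $b\in M^*_i$. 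Now if $\tp(b/N_j)\neq p\restriction N_j$ for some $j>i$, uniqueness of non-splitting extensions forces $\tp(b/N_j)$ to $\mu$-split over $\check N$; applying $f^*_{i,\theta}$ (which fixes $\check N\prec_{\K}N_i$) and using invariance, $\tp(b^*/f^*_{i,\theta}(N_j))$ $\mu$-splits over $\check N$, and since $f^*_{i,\theta}(N_j)\prec_{\K}N^*$, monotonicity contradicts the choice of $p^*$. Hence $b\models p\restriction N_j$ for all $j<\theta$, and $\mu$-superstability together with uniqueness of non-$\mu$-splitting extensions yields $\tp(b/N)=p$ with $b\in M_i\prec_{\K}M$, as required. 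Incidentally, your other omission --- the limit stages of the directed system --- is harmless: you may take plain direct limits there, since every element of $M^*$ already lies in the image of a successor-stage model $M^*_{i+1}\prec_{\K}M_{i+1}$, so the pullback can always be taken at a successor index; the paper instead re-anchors each limit stage inside $M_i$ via the uniqueness hypothesis, but for your variant that is a convenience rather than a necessity.
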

The proof is similar to the proof of Theorem \ref{limit thm}, only here the construction of $\langle M^*_i\mid i<\theta\rangle$ inside $M:=\Union_{i<\theta}M_i$ is a little more nuanced since the cardinality of $M^*_i$ and the cardinality of the saturated models $M_i$ may be the same.  We will be using directed limits, and while we won't arrange that the limit of the directed system of $\langle M^*_i\mid i<\theta\rangle$ lies in $M$, we will get the most critical part, the realization of the type, to lie in $M$.

\begin{proof}
As in the first paragraphs of the proof of Theorem \ref{limit thm}, we may assume without loss of generality that the sequence  $\langle M_i\in\K_{\geq\mu^{+}}\mid i<\theta<(\sup\|M_i\|)^+\rangle$  is continuous and that $\cf(\theta)=\theta<\mu^+$.

Fix $N\in\K_\mu$ with $N \prec_{\K}\Union_{i<\theta}M_i$  and suppose $p\in\gaS(N)$ is omitted in $M:=\Union_{i<\theta}M_i$.  Then, because each $M_{i+1}$ is $\mu^+$-saturated,  we may assume without loss of generality  that $N$ is a $(\mu,\theta)$-limit model witnessed by $\langle N_i\mid i<\theta\rangle$ with $N_i\prec_{\K}M_i$, if necessary by expanding $N$.  Furthermore by $\mu$-superstability we may assume that $p$ does not $\mu$-split over some $\check N$ with $N_0$ a limit model over  $\check N$, by renumbering the sequences $\bar N$ and $\bar M$ if necessary.  

For each $i<\theta$, because $M_i$ is $\mu^+$-saturated, we can find a sequence $\langle \mathring M_i^\alpha\in\K_{\mu}\mid \alpha<\mu^+\rangle$ so that $\mathring M^0_i=N_i$ ,
 $\mathring M^\alpha_i\prec_{\K}M_i$, and $\mathring M^{\alpha+1}_i$ is $\mu$-universal over $\mathring M_i^\alpha$.  Therefore $M_i$ contains a $(\mu,\mu^+)$-limit model, which is 
 isomorphic to  a $(\mu^+,\mu^+)$-limit model by Proposition \ref{mu-plus-limit}.  So, inside each $M_i$ we can find a $(\mu^+,\mu^+)$-limit model witnessed by a sequence that we will denote by $\langle\grave M_i^\alpha\in\K_{\mu^+}\mid \alpha<\mu^+\rangle$,  and  we may arrange the enumeration so that  $N_i\prec_{\K}\grave M^0_i$.

We will build a directed system  of models $\langle M^*_i\mid i<\theta\rangle$ with mappings $\langle f_{i,j}\mid i\leq j<\theta\rangle$ so that the following conditions are satisfied:
\begin{enumerate}
\item $M^*_i\in\K_{\mu^+}$.
\item $M^*_i\preceq_{\K}\Union_{\alpha<\mu^+}\grave M_i^\alpha\preceq_{\K}M_i$. 

\item for $i\leq j<\theta$, $f_{i,j}:M^*_i\rightarrow M^*_j$. 
\item\label{identity condition} for $i\leq j<\theta$, $f_{i,j}\restriction N_i=id_{N_i}$.
\item\label{univ condition direct limit} $M^*_{i+1}$ is universal over $f_{i,i+1}(M^*_i)$.

\end{enumerate}

 Refer to Figure \ref{fig:T*}.
 \begin{figure}[tb]
\begin{tikzpicture}[rounded corners=5mm,scale =2.5,inner sep=.5mm]
\draw (0,0) rectangle (4.5,-.5);
\draw (0,0) rectangle (4.5,-2);
\draw (0,0) rectangle (1,-2);
\draw (0,0) rectangle (2,-2);
\draw (0,0) rectangle (3,-2);
\draw (.85,-.4) node {$N_0$};
\draw (1.75,-.4) node {$\dots N_j$};
\draw (2.8,-.4) node {$N_{j+1}$};
\draw (3.85,-.4) node {$\dots \Union_{i<\theta}N_i=N$};
\draw (.8,-1.9) node {$M_0$};
\draw (1.7,-1.9) node {$\dots M_j$};
\draw (2.7,-1.9) node {$M_{j+1}$};
\draw (3.85,-1.9) node {$\dots \Union_{i<\theta}M_i=M$};
\coordinate (m01-in) at (0,-.9);
\coordinate (m01-out) at (1,-.9);
\draw    (m01-in) to[out=-20,in=200] coordinate[pos=0.7](A1)  (m01-out);
\draw (.6,-.9) node {$M^*_{0}$};
\coordinate (m10-in) at (0,-.5);
\coordinate (m10-out) at (2,-.5);
\draw    (m10-in) to[out=-20,in=240] coordinate[pos=0.7](Ai)  coordinate[pos=.8](ai)(m10-out);
\draw (1.4,-.7) node {$M^*_{j}$};
\coordinate (m1t-in) at (0,-.4);
\coordinate (m1t-out) at (3,-0.5);
\draw    (m1t-in) to[out=-80,in=240] coordinate[pos=0.8](Ai1) (m1t-out);
\draw (2.6,-.7) node {$\grave M^1_{j+1}$};
\draw [->, shorten >=3pt] (A1) to [bend right=65] node[pos=0.7,below] {$f_{0,j}$}(Ai);
\draw [->, shorten >=3pt] (ai) to [bend right=25] node[pos=0.7,above] {$f_{j,j+1}$}(Ai1);
\draw    (m1t-in) to (.2, -1.5) to (2.2,-1.8) to coordinate[pos=0.8](Ai1) (m1t-out);
\draw (1.5,-1.55) node {$\grave M^2_{j+1}=M^*_{j+1}$};
\end{tikzpicture}
\caption{The directed system in the proof of Theorem \ref{saturated theorem}.} \label{fig:T*}
\end{figure}
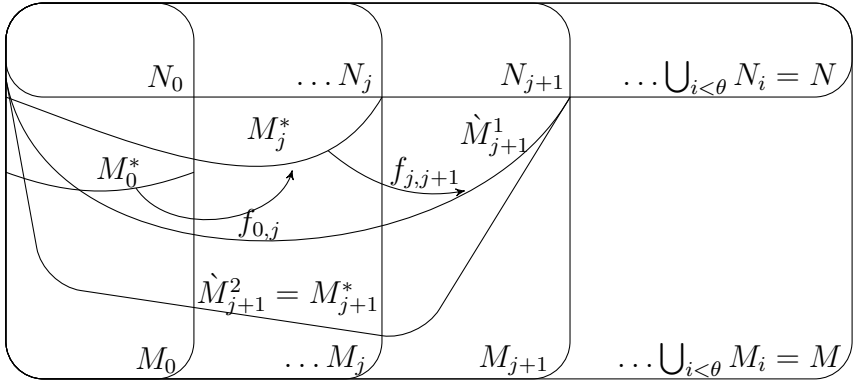

The construction is possible.  Take $M^*_0$ to be $\grave M_0^1$ and $f_{0,0}=\id$.  At limit stages take $M^{**}_i$ and $\langle f^{**}_{k,i}\mid k<i\rangle$ to be a direct limit as in Fact \ref{direct limit lemma}.  We do not immediately get that $M^{**}_i\preceq_{\K}M_i$; we just know we can choose $M^{**}_i$ to contain $N_i$ by the continuity of $\bar N$ and condition \ref{identity condition} of the construction.  We also know by condition \ref{univ condition direct limit} that $M^{**}_i$ is a $(\mu^+,i)$-limit model  witnessed by $\langle f_{k,i}(M^*_k)\mid k<i\rangle$.
By our assumption of the uniqueness of limit models of cardinality $\mu^+$,
$M^{**}_i$ is a $(\mu^+,\mu^+)$-limit model.  Since $N_i$ has cardinality $\mu$, being able to write $M^{**}_i$ as a $(\mu^+,\mu^+)$-limit model tells us that $M^{**}_i$ is $\mu^+$-universal over $N_i$.  Recall that  $\Union_{\alpha<\mu^+}\grave M^\alpha_i$ is also a $(\mu^+,\mu^+)$-limit model containing $N_i$.  Therefore, we can find an isomorphism $g$ from $M^{**}_i$ to $\Union_{\alpha<\mu^+}\grave M^\alpha_i$ fixing $N_i$.  Now take $M^*_i:=g(M^{**}_i)=\Union_{\alpha<\mu^+}\grave M^\alpha_i$, $f_{k,i}:=g\circ f^{**}_{k,i}$ for $k<i$, and $f_{i,i}=\id$.  

For the successor stage of the construction, assume that $M^*_j$ and $\langle f_{k,j}\mid k\leq j\rangle$ have been defined.  Since $M^*_j$ is a model of cardinality $\mu^+$ containing $N_j$ and because $\grave M^{1}_{j+1}$ is $\mu^+$-universal over $N_{j+1}$ we can find a embedding $g:M^*_j\rightarrow \grave M^1_{j+1}$ with $g\restriction N_j=\id_{N_j}$.  Take $M^*_{j+1}:=\grave M^2_{j+1}$,  set $f_{k,j+1}:=g\circ f_{k,j}$ for all $k\leq j$, and define $f_{j+1,j+1}:=\id$.  This completes the construction.

Take $M^*$ with mappings $\langle f_{i,\theta}\mid i<\theta\rangle$  to be the direct limit of the system as in Fact \ref{direct limit lemma}.  While $M^*$ may not be inside $M$, we can arrange that $f_{i,\theta}\restriction N_i=\id_{N_i}$ and that $N\prec_{\K}M^*$.  Notice that by condition \ref{univ condition direct limit} of the construction, $M^*$ is a $(\mu^+,\theta)$-limit model.   From our assumption of the uniqueness of $\mu^+$-limit models and Proposition \ref{mu-plus-limit}, we can conclude that $M^*$ is saturated.  

For each $i<\theta$, let $f^*_{i,\theta}\in\Aut(\C)$ extend $f_{i,\theta}$ so that $f^*_{i,\theta}(N)\preceq_{\K}M^*$.  This is possible since we know that $M^*$ is $\mu^+$-universal over $f_{i,\theta}(M^*_i)$ by condition \ref{univ condition direct limit} of the construction.  
Let $N^*\prec_{\K}M^*$ be a model of cardinality $\mu$ extending $N$ and $\Union_{i<\theta} f^*_{i,\theta}(N)$.  By the extension property for non-$\mu$-splitting, we can find $p^*\in\gaS(N^*)$ extending $p$ so that 
\begin{equation}\label{p*}
p^*\text{ does not }\mu\text{-split over }\check N.
\end{equation}
  Since $M^*$ is a saturated model of cardinality $\mu^+$ containing the domain of $p^*$,
we can find $b^*\in M^*$ realizing $p^*$.  By the definition of a direct limit, there exists $i<\theta$ and $b\in M^*_i$ so that $f_{i,\theta}(b)=b^*$.  

Because $f_{i,\theta}\restriction N_i=id_{N_i}$, we know that $b\models p\restriction N_i$.  Suppose for sake of contradiction that there is some $j>i$ so that $\tp(b/N_j)\neq p\restriction N_j$.  Then, by the uniqueness of non-splitting extensions, it must be the case that $\tp(b/N_j)$ $\mu$-splits over $\check N$.  By invariance, 
\begin{equation}\label{non-split equation}
\tp(f_{i,\theta}(b)/f^*_{i,\theta}(N_j)) \;\mu\text{-splits over }\check N.
\end{equation}  By monotonicity of non-splitting,  the definition of $b$, and choice of $N^*$ containing $f^*_{i,\theta}(N)$, $(\ref{non-split equation})$ implies $\tp(b^*/N^*)$ $\mu$-splits over $\check N$.  This contradicts $(\ref{p*})$.

Since $b\models p\restriction N_j$ for all $j<\theta$ and $p\restriction N_j$ does not $\mu$-split over $\check N$, $\mu$-superstability implies that $\tp(b/N)$ does not $\mu$-split over $\check N$.  By uniqueness of non-$\mu$-splitting extensions $\tp(b/N)=p$.  Since $b\in M_i$, we are done.

\end{proof} 
 
%

\section{Concluding Remarks}
The characterization of $\mu$-symmetry by reduced towers in \cite{Va3-SS} spawned many results during the summer of 2015, including the work here.  While these new results deal with some of the same concepts (towers, superstability, limit models, union of saturated models), the contexts and methods differ.  The focus here is in local properties of the classes $\K_\mu$ and $\K_{\mu^+}$ without assuming categoricity, tameness, or sufficiently large cardinals.  In this section, we summarize how some of the other results relate to Theorem \ref{main theorem} and Theorem \ref{symmetry transfer}.

Most closely related to Theorem \ref{symmetry transfer} is \cite{VV} where the authors develop a more nuanced technology of towers.  
The structure of the proof of Theorem \ref{symmetry transfer} involves taking a tower $\T\in\K^*_{\mu,\alpha}$ and building from it a tower in $\K^*_{\mu^+,\alpha}$.  VanDieren and Vasey show that it is possible to carry out this kind of construction to produce a tower in $\K^*_{\lambda,\alpha}$ for $\lambda>\mu^+$ \cite{VV} if one assumes $\kappa$-superstability for an interval of cardinals.  The consequent improvements of Theorem \ref{symmetry transfer} and its corollaries to more global properties of the class are explored in \cite{VV}.  
Another paper using this technology of towers is \cite{Bo-Van} in which the authors, Boney and VanDieren, study the implications of Theorem \ref{main theorem} and Theorem \ref{symmetry transfer}  in classes that are $\mu$-stable but not $\mu$-superstable.

In just a few months after the introduction of $\mu$-symmetry and its equivalent formulation and the announcement of Theorem \ref{main theorem}, several advances have been made.  Theorem \ref{main theorem} has broken down a door in the development of a classification theory for abstract elementary classes assuming additional properties on the class like tameness or additional structural properties like categoricity.  
VanDieren and Vasey examine Theorem \ref{main theorem} in \emph{tame} abstract elementary classes and use it to show the existence of a unique type-full good $\mu^+$-frame in a $\mu$-superstable, $\mu$-tame AEC \cite{VV}.  This analysis is then used by VanDieren and Vasey to improve structural results for  AECs categorical in a sufficiently large cardinality.  For example,  
they show
 that for $\K$  an AEC with no maximal models and $\mu$ is a cardinal $\geq\LS(\K)$,  if $\K$ is categorical in a $\lambda\geq h(\mu^+)$, then the model of size $\lambda$ is $\mu^+$-saturated \cite{VV2}.
The union of saturated models is saturated is employed by Vasey  to prove the equivalence of the existence of prime models and categoricity in a tail of cardinals  in categorical, tame, and short AECs \cite{V-prime}.  Furthermore, Vasey in \cite{V-downward} uses Theorem \ref{main theorem} in a crucial way
to lower the bound, from the second Hanf number down to the first, on the categoricity cardinal in Shelah's seminal Downward Categoricity Theorem for AECs  \cite{Sh394}.
Additionally, VanDieren has examined the proofs of Theorem \ref{symmetry transfer} and Theorem \ref{main theorem} in categorical AECs in which the amalgamation property is not assumed \cite{Va-char}, providing additional insight into Shelah and Villaveces' original exploration of limit models \cite{ShVi}.


 \section*{Acknowledgement}
 The author is grateful to Sebastien Vasey for email correspondence about \cite{Va3-SS} during which he asked her  about the union of saturated models.
 She is also thankful to Rami Grossberg, William Boney, Sebastien Vasey, and the referees  for suggestions and comments that improved the clarity of this paper.   
 
\bibliographystyle{elsarticle-harv}
\bibliography{union-of-sat-transfer-12.18.bib}

\def\germ{\frak} \def\scr{\cal} \ifx\documentclass\undefinedcs
  \def\bf{\fam\bffam\tenbf}\def\rm{\fam0\tenrm}\fi 
  \def\defaultdefine#1#2{\expandafter\ifx\csname#1\endcsname\relax
  \expandafter\def\csname#1\endcsname{#2}\fi} \defaultdefine{Bbb}{\bf}
  \defaultdefine{frak}{\bf} \defaultdefine{=}{\B} 
  \defaultdefine{mathfrak}{\frak} \defaultdefine{mathbb}{\bf}
  \defaultdefine{mathcal}{\cal}
  \defaultdefine{beth}{BETH}\defaultdefine{cal}{\bf} \def\bbfI{{\Bbb I}}
  \def\mbox{\hbox} \def\text{\hbox} \def\om{\omega} \def\Cal#1{{\bf #1}}
  \def\pcf{pcf} \defaultdefine{cf}{cf} \defaultdefine{reals}{{\Bbb R}}
  \defaultdefine{real}{{\Bbb R}} \def\restriction{{|}} \def\club{CLUB}
  \def\w{\omega} \def\exist{\exists} \def\se{{\germ se}} \def\bb{{\bf b}}
  \def\equivalence{\equiv} \let\lt< \let\gt>
\begin{thebibliography}{25}
\expandafter\ifx\csname natexlab\endcsname\relax\def\natexlab#1{#1}\fi
\expandafter\ifx\csname url\endcsname\relax
  \def\url#1{\texttt{#1}}\fi
\expandafter\ifx\csname urlprefix\endcsname\relax\def\urlprefix{URL }\fi

\bibitem[{Albert and Grossberg(1990)}]{AG}
Albert, M., Grossberg, R., 1990. Rich models. Journal of Symbolic Logic 55,
  1292--1298.

\bibitem[{Baldwin(2009)}]{Ba}
Baldwin, J.~T., 2009. Categoricity. Vol.~50 of University Lecture Series.
  American Mathematical Society.

\bibitem[{Boney and Grossberg(2015)}]{BoGr}
Boney, W., Grossberg, R., 2015. Forking in short and tame aecs. ArXiv:1406.5980
  [math.LO].

\bibitem[{Boney and VanDieren(2015)}]{Bo-Van}
Boney, W., VanDieren, M., 2015. Limit models in strictly stable abstract
  elementary classes. ArXiv:1508.04717.

\bibitem[{Boney and Vasey(2015)}]{BoVa-tame}
Boney, W., Vasey, S., 2015. Tameness and frames revisited.

\bibitem[{Grossberg and VanDieren(2006)}]{GV}
Grossberg, R., VanDieren, M., 2006. Categoricity from one successor cardinal in
  tame abstract elementary classes. Journal of Mathematical Logic 6~(2),
  181--201.

\bibitem[{Grossberg et~al.(2015)Grossberg, VanDieren, and Villaveces}]{GVV}
Grossberg, R., VanDieren, M., Villaveces, A., 2015. Uniqueness of limit models
  in abstract elementary classes. Mathematical Logic Quarterly, To appear.

\bibitem[{Grossberg and Vasey(2015)}]{GVas}
Grossberg, R., Vasey, S., 2015. Superstability in abstract elementary classes.
  ArXiv:1507.04223v2 [math.LO].

\bibitem[{Harnik(1975)}]{Ha}
Harnik, V., 1975. On the existence of saturated models of stable theories.
  Proceedings of the American Mathematical Society 52, 361--367.

\bibitem[{Hart and Shelah(1990)}]{HS}
Hart, B., Shelah, S., 1990. {Categoricity over $P$ for first order $T$ or
  categoricity for $\phi\in L_{\omega_1,\omega}$ can stop at $\aleph_k$ while
  holding for $\aleph_0,\dots ,\aleph_{k-1}$}. Israel Journal of Mathematics
  70, 219--235.

\bibitem[{Shelah(1990)}]{She}
Shelah, S., 1990. Classification theory, revised Edition. North Holland.

\bibitem[{Shelah(1999)}]{Sh394}
Shelah, S., 1999. {Categoricity for abstract classes with amalgamation}. Annals
  of Pure and Applied Logic 98, 261--294.

\bibitem[{Shelah(2009)}]{Sh-AECbook}
Shelah, S., 2009. Classification Theory for Abstract Elementary Classes.
  Vol.~18 of Studies in Logic: Mathematical Logic and Foundations. College
  Publications.

\bibitem[{Shelah and Villaveces(1999)}]{ShVi}
Shelah, S., Villaveces, A., 1999. {Toward categoricity for classes with no
  maximal models}. Annals of Pure and Applied Logic 97, 1--25,
  arxiv:math.LO/9707227.

\bibitem[{VanDieren(2006)}]{Va1}
VanDieren, M., 2006. Categoricity in abstract elementary classes with no
  maximal models. Annals of Pure and Applied Logic 141, 108--147.

\bibitem[{VanDieren(2013)}]{Va2}
VanDieren, M., 2013. {Erratum to `Categoricity in abstract elementary classes
  with no maximal models.' APAL 141 (2006) 108--147}. Annals of Pure and
  Applied Logic 164, 131--133.

\bibitem[{VanDieren(2015{\natexlab{a}})}]{Va-char}
VanDieren, M., 2015{\natexlab{a}}. A characterization of uniqueness of limit
  models in categorical abstract elementary classes. ArXiv:1511.09112
  [math.LO].

\bibitem[{VanDieren(2015{\natexlab{b}})}]{Va3-SS}
VanDieren, M., 2015{\natexlab{b}}. Superstability and symmetry.
  ArXiv:1507.01990 [math.LO].

\bibitem[{VanDieren(2015{\natexlab{c}})}]{Vold-transfer}
VanDieren, M., 2015{\natexlab{c}}. Transferring symmetry. ArXiv:1507.01991
  [math.LO].

\bibitem[{VanDieren(2015{\natexlab{d}})}]{Vold-union}
VanDieren, M., 2015{\natexlab{d}}. Union of saturated models in superstable
  abstract elementary classes. Arxiv:1507.01989 [math.LO].

\bibitem[{VanDieren and Vasey(2015{\natexlab{a}})}]{VV2}
VanDieren, M., Vasey, S., 2015{\natexlab{a}}. On the structure of categorical
  abstract elementary classes with amalgamation. ArXiv:1509.01488 [math.LO].

\bibitem[{VanDieren and Vasey(2015{\natexlab{b}})}]{VV}
VanDieren, M., Vasey, S., 2015{\natexlab{b}}. Transferring symmetry downward
  and applications. ArXiv:1508.03252 [math.LO].

\bibitem[{Vasey(2015{\natexlab{a}})}]{V-downward}
Vasey, S., 2015{\natexlab{a}}. {A downward categoricity transfer for tame
  abstract elementary classes}. ArXiv:1510.03780 [math.LO].

\bibitem[{Vasey(2015{\natexlab{b}})}]{V1}
Vasey, S., 2015{\natexlab{b}}. {Independence in AECs}. ArXiv:1503.01366
  [math.LO].

\bibitem[{Vasey(2015{\natexlab{c}})}]{V-prime}
Vasey, S., 2015{\natexlab{c}}. {On prime models in totally categorical abstract
  elementary classes}. ArXiv:1509.07024 [math.LO].

\end{thebibliography}

\end{document}